\newcommand{\BR}{\mathbb{R}}
\newcommand{\mX}{{\cal X}}
\def\boxit#1{\vbox{\hrule\hbox{\vrule\kern6pt
			\vbox{\kern6pt#1\kern6pt}\kern6pt\vrule}\hrule}}
\numberwithin{equation}{section}
\theoremstyle{plain}
\newtheorem{theorem}{Theorem}[section]
\newtheorem{lemma}{Lemma}[section]
\newtheorem{corollary}{Corollary}[section]
\theoremstyle{remark}
\begin{document}

\begin{frontmatter}
\title{Bayesian Shrinkage towards Sharp Minimaxity}
\runtitle{Bayesian Sharp Minimax}

\begin{aug}
\author{\fnms{Qifan} \snm{Song}\thanksref{a,e1}\ead[label=e1,mark]{qfsong@purdue.edu}}

\address[a]{Department of Statistics, Purdue University, \\ 250 University St, West Lafayette, Indiana, U.S.A.
\printead{e1}}

\runauthor{Q. Song}

\affiliation{Purdue University}

\end{aug}

\begin{abstract}
Shrinkage prior are becoming more and more popular in Bayesian modeling for high dimensional sparse problems due to its computational
efficiency. Recent works show that a polynomially decaying prior leads to satisfactory posterior asymptotics under regression
models. In the literature, statisticians have investigated how the global shrinkage parameter, i.e., the scale parameter, 
in a heavy tail prior affects the posterior contraction. In this work, we explore how the shape of the prior, or more specifically, the polynomial order of the prior tail
affects the posterior. We discover that, under the sparse normal means models, the polynomial order does affect the multiplicative constant
of the posterior contraction rate. More importantly, if the polynomial order is sufficiently close to 1, it will induce the 
optimal Bayesian posterior convergence, in the sense that the Bayesian contraction rate is sharply minimax, i.e., not only the order, but also the multiplicative constant of the posterior contraction rate are optimal.
The above Bayesian sharp minimaxity holds when the global shrinkage parameter follows a deterministic choice which
depends on the unknown sparsity $s$. Therefore, a Beta-prior modeling is further proposed, such that our sharply minimax Bayesian procedure
is adaptive to unknown $s$. Our theoretical discoveries are justified by simulation studies.
\end{abstract}

\begin{keyword}
\kwd{Shrinkage prior; Bayesian sharp minimax; heavy-tailed prior; adaptive prior}

\end{keyword}

\end{frontmatter}

\section{Introduction}\label{intro}
In Bayesian inference for high dimensional sparse models, the prior distribution needs to
incorporate certain {\it a priori} knowledge of the structural sparsity.
The classical spike-and-slab modeling assigns two-groups prior to each entry of a sparse
$n$-dimensional parameter vector $\theta^{(n)}=(\theta_1,\dots,\theta_n)^T$, i.e., $\pi(\theta_i)$ is a mixture of two distributions which correspond to $\theta_i=0$ and $\theta_i\neq0$ respectively. 
This natural modeling however requires expensive posterior computation.
An alternative one-group Bayesian modeling, or so-called shrinkage prior, is much more computational
attractive, hence gains more and more popularity in the Bayesian community.

In this work, we consider the sparse normal means model, $y^{(n)}=\theta^{(n)}+\epsilon$, where 
$y^{(n)}=(y_1,\dots,y_n)^T\in\BR^n$, $\epsilon\sim N(0,\sigma^2 I_n)$, and the parameter of interest is
$\theta^{(n)}\in\BR^n$. 
Throughout this work, the variance $\sigma^2$ of the error term is assumed to be known, and W.O.L.G, we let $\sigma^2=1$.
Suppose that the true parameter value $\theta^*$ is a sparse
vector with $s^{(n)}$ nonzero entries, and asymptotically, we allow $s^{(n)}$ to increase simultaneously
with $n$.
For the sake of simplicity of the notation, we drop the superscript ${(n)}$ from $y^{(n)}$,
$\theta^{(n)}$ and $s^{(n)}$ in what follows.
This normal means model can be viewed as the simplest regression problem under orthogonality.
Theoretically, the study of the sparse normal means model can provide us important insights about Bayesian regression
under a shrinkage prior. In practice, applications of normal means model include multiple testing problems where $y_i$ is 
the $z$-test statistics, and signal detection problems where $y_i$ can be a noisy pixel in a 
functional magnetic resonance imaging (fMRI).
In the literature, a variety of shrinkage priors have been proposed for Bayesian
inferences of $\theta$. Popular examples include Bayesian Lasso \citep{ParkC2008,Hans2009},
Horseshoe prior \citep{CarvalhoPS2010}, Dirichlet-Laplace prior \citep{BhattacharyaPPD2015},
Normal-Exponential-Gamma distribution \citep{GriffinB2011},
Generalized double Pareto distribution \citep{ArmaganDL2013},
Generalized Beta mixture of Gaussian distributions \citep{ArmaganCD2011} and etc.
A general form of shrinkage priors can be written as 
\begin{equation}\label{gprior}
\pi(\theta|\tau) =\prod_{i=1}^n \{(1/\tau)\pi_0(\theta_i/\tau)\},\quad \tau\sim \pi(\tau),
\end{equation}
where the scale parameter $\tau$ is called global shrinkage parameter which controls the overall
shrinkage effect. $\tau$ can either follow a prior distribution as in (\ref{gprior}) or have a deterministic value. If furthermore, $\pi_0$ can be expressed as a scaled mixture of Gaussian
distribution, (\ref{gprior}) then leads to the so-called local-global shrinkage:
$\theta_i\sim N(0, \lambda^2_i\tau^2)$, $\lambda_i^2\sim\pi(\lambda_i^2), \tau\sim \pi(\tau)$
and $\lambda_i$'s are called local shrinkage parameters. Note that the Dirichlet-Laplace prior is an exception
and doesn't fit the general form (\ref{gprior}).

Given a wide choice of shrinkage priors, certain criteria are necessary to evaluate and compare 
these different priors, e.g., computational efficiency and theoretical convergence.
A benchmark for the theoretical performance is the posterior contraction rate, which characterizes how fast the posterior distribution (not just the Bayes estimator)
converges to the true parameter. For example, the $L_2$ posterior contraction rate $r_n$ satisfies:
\[
\lim_{n\rightarrow\infty} E^*[\pi(\|\theta-\theta^*\|\geq r_n|y)] = 0, \mbox{ for any }\theta^*,
\]
where $E^*$ denotes the expectation with respect to the data generation measure of $n$ dimensional data $y$ under true parameter $\theta^*$.
It is well known that the frequentist minimax rate for normal means models is 
$\min_{\widehat\theta}\max_{\theta^*}\|\widehat\theta-\theta^*\|=\{(2+o(1))s\log(n/s)\}^{1/2}$ \citep{DonohoJHS1992},
and many Bayesian works show that the Bayesian contraction rates are comparable to this minimax rate. 
For example, Dirichlet-Laplace prior \citep{BhattacharyaPPD2015} (under certain conditions of $\theta^*$) achieves $r_n\asymp \{s\log(n/s)\}^{1/2}$, and 
\citet{VanKV2014} showed that horseshoe prior achieves $r_n=M_n\{s\log(n/s)\}^{1/2}$ for any $M_n\rightarrow\infty$.
Furthermore, recent works \citep[e.g.,][]{van2016conditions, GhoshC2014,SongL2017} show that the tail behavior of $\pi_0$ plays an important role for posterior asymptotics, and suggest to choose a polynomial decaying $\pi_0$ in order to achieve (near-) optimal posterior contraction rate. It is worth noting that all the aforementioned shrinkage priors, except Dirichlet-Laplace prior, have a polynomial tail. Thus we believe polynomially decaying shrinkage priors are, generally speaking, 
good choices for the high dimensional problem. 
As for the Dirichlet-Laplace prior, we notice that, although its posterior contraction is rate-minimax, its theory requires  $\|\theta^*\|\leq s^{1/2}\log^2(n)$.
We comment that this is actually a very strong condition. Under this condition, even the naive estimator 
$\widehat\theta=0$ can achieve rate-$\{s^{1/2}\log^2(n)\}$ convergence, and minimax rate has merely
 logarithmic improvement.

Given that $\pi_0$ is polynomially decaying, existing literature focuses on how to (adaptively) choose
the global shrinkage $\tau$, i.e., the scale of the prior, such that the order of posterior
contraction rate is (near-)optimal \citep[e.g.,][]{SongL2017, GhoshC2014,VanKV2014,VanSV2017}.
In this work, we will study another aspect of this story, that is how the shape of the prior distribution, i.e., the polynomial order of $\pi_0$, affects the posterior asymptotics.
Our contribution of this work is two-fold. 
First, we show that if the polynomial order of  $\pi_0$ is sufficiently close to 1, we can
achieve Bayesian {\it sharp} minimax, i.e., $r_n/\{2s\log(n/s)\}^{1/2}$ is sufficiently close to 1.
This sharp minimaxity holds for the $L_1$ norm as well.
Our simulation study also demonstrates that it is necessary to choose a tiny polynomial order
in order to obtain the optimal contraction. In Bayesian literature, the sharpness in term of multiplicative constant is barely investigated, and
our work sharpens all existing results on Bayesian posterior convergence for normal means problem.
To attain such sharp minimaxity, the choice of $\tau$ will depend on true sparsity ratio $(s/n)$ which in practice is unknown.
Therefore, our second contribution is to propose a Beta modeling on $\tau$. This leads to
a Bayesian sharply minimax inference procedure that is adaptive to unknown sparsity.
Simulations show that this adaptive Beta modeling has an excellent performance.

This paper is organized as follows.
In Section \ref{secmain}, we study the relationship between the polynomial order of $\pi_0$ and
the posterior contraction rate, and establish the Bayesian sharp minimax.
In Section \ref{secada}, we propose an adaptive modeling which doesn't depend on $s$.
Two simulation studies and one real cancer data application are presented in Section \ref{simu}. 
In the end, Section \ref{end} provides more discussions and remarks.
Technical proofs are provided in the Appendix.

Throughout this work, we use $D_n$ to denote the observed data (i.e., $D_n=\{y_i\}_{i=1}^n$), and $\pi(\cdot|D_n)$
to denote the posterior based on data $D_n$. Given two positive sequences $\{a_n\}$ and $\{b_n\}$, 
$a_n\succ b_n$ means $\lim(a_n/b_n)=\infty$, $a_n\asymp b_n$ means $0<\lim\inf(a_n/b_n)$ $\leq\lim\sup(a_n/b_n)<\infty$, and $a_n\sim b_n$ means $\lim(a_n/b_n)=1$.
$\|\cdot\|$ and $\|\cdot\|_1$ denote $L_2$ and $L_1$ norms of a vector respectively.

\section{Sharp Bayesian minimaxity}\label{secmain}
As discussed in the Section \ref{intro}, 
we consider Bayesian inferences for the sparse normal means model under a general prior specification (\ref{gprior}), where $\pi_0$ has a polynomial tail; in other words, we assume the following conditions on the model sparsity and prior distribution $\pi_0$:

[C.1] The true model is sparse $s=o(n)$.

[C.2] The prior density $\pi_0(\cdot)$ is strictly decreasing on $(0,\infty)$ and increasing on $(-\infty, 0)$.

[C.3] The tail of $\pi_0(\cdot)$ is polynomially decaying with polynomial order $\alpha>1$, i.e, there exist some positive constants $M$ and $C_2>C_1$
such that for any $|\theta|>M$, $ C_1|\theta|^{-\alpha}\leq\pi_0(\theta)\leq C_2|\theta|^{-\alpha}$.

Note the condition C.3 (i.e., the polynomial decaying of $\pi_0$) implies the polynomial decaying of $\pi(\theta_i|\tau)$ as well: if $|\theta_i|\gg \tau$, $\pi(\theta_i|\tau)\asymp |\theta_i|^{-\alpha}\tau^{\alpha-1}$. 

For the simplicity of analysis, in this section we only investigate the posterior asymptotics when the global shrinkage parameter $\tau$ is a deterministic value,
under which the posteriors of $\theta_i$'s are mutually independent.

Let's first intuitively understand the posterior properties induced by a polynomial decaying prior. 
The posterior distributions of all $\theta_i$'s independently 
follow
\[
\pi(\theta_i|y_i) =C \exp\{-(y_i-\theta_i)^2/2\}\pi(\theta_i),
\]
for some normalizing constant $C$. Since both functions $\exp\{-(y_i-\theta_i)^2/2\}$ and prior $\pi(\theta_i)$ are unimodal, 
heuristically, the posterior $\pi(\theta_i|y_i)$ will have two major modes, around 0 and $y_i$
respectively.
The posterior mass of the mode around 0 is approximately
$\pi\{\theta_i\in(-\delta,\delta)|y_i\}\approx C \exp\{-y_i^2/2\}\pi(\theta_i\in(-\delta,\delta)) \approx C \exp\{-y_i^2/2\}$
for some $\delta$ satisfying $\delta\succ \tau$;
The posterior mass of the mode around $y_i$ is approximately
$\pi(\theta_i\in y_i\pm \{\delta' \log (n/s)\}^{1/2}|y_i)\approx C \int_{y_i\pm \sqrt{\delta' \log (n/s)}}\exp\{-(y_i-\theta_i)^2/2\}d\theta_i\pi(y_i)\approx C\sqrt{2\pi} \pi(y_i) $ for any small constant $\delta'$.
Therefore, if $\exp\{-y_i^2/2\}\succ \pi(y_i)$, the dominating posterior mode is the one at 0; if $\exp\{-y_i^2/2\}\prec \pi(y_i)$, the dominating posterior  mode is the one at $y_i$.
Note that in the above comparison, we consider different neighbor radiuses around 0 and $y_i$, this is due to the fact that the landscape of $\pi(\theta_i|y_i)$ has two unequally wide modes (refer to Figure \ref{asy} for an illustration).
This heuristic comparison demonstrates a hard thresholding phenomenon for the Bayesian posterior center: when $|y_i|\ll t$, the posterior of $\pi(\theta_i|y_i)$ shrinks to 0; when $|y_i|\gg t$, the 
posterior will be concentrated around $y_i$, where the threshold value $t$ satisfies $\exp\{-t^2/2\}\asymp \pi(t)\asymp t^{-\alpha}\tau^{\alpha-1}$.
This is analogous to the hard thresholding estimator $\hat\theta_i = y_i1(|y_i|\geq t)$ (refer to Figure \ref{sp} for more details).
It is known that the hard thresholding estimator achieves asymptotic sharp minimaxity when $t=\{2\log(n/s)\}^{1/2}$, thus we conjecture that such
sharp minimaxity carries over to the posterior {mean} of Bayesian hard thresholding if the same threshold value is used, i.e., the prior satisfies $\exp\{-\log(n/s)\} \asymp\tau^{\alpha-1}\{2\log(n/s)\}^{-\alpha/2}$.
In addition, to derive minimax posterior contraction beyond minimax posterior mean, we also need to control
the posterior variation, especially the posterior variation
for these zero $\theta_i$'s. Hence, we need to impose sufficiently strong 
shrinkage effect such that the posteriors of these zero $\theta_i$'s contract inside a small
neighborhood around zero. Equivalently, this requires a sufficiently small global shrinkage parameter $\tau$. 
Rigorously, we establish the following theorem whose proof is presented in Appendix \ref{proof1}.

\begin{theorem}\label{main}
	Given a positive constant $\omega$, if $\tau^{\alpha-1}\geq (s/n)^c\{\log(n/s)\}^{1/2}$ for some $c\in(0, 1+\omega/2)$, and 
	$\tau^{\alpha-1}\prec \{(s/n)\log(n/s)\}^{\alpha}$, then
	\begin{equation}\label{mainresult0}
	\lim E^*(\pi[\|\theta-\theta^*\|\geq C_1(\omega)\{s\log(n/s)\}^{1/2}|D_n])=0,
	\end{equation}
	where $C_1(\omega)=\sqrt{2+\omega}+\sqrt{\omega}$ and it satisfies
	$\lim_{\omega\downarrow 0}C_1(\omega)=\sqrt{2}$. If furthermore, $\tau^{\alpha-1}\prec(s/n)^\alpha\{\log(n/s)\}^{(\alpha+1)/2}$,
	then
	\begin{equation}\label{mainresult1}
	\lim E^*(\pi[\|\theta-\theta^*\|_1\geq sC_2(\omega)\{\log(n/s)\}^{1/2}|D_n])=0.
	\end{equation}
	where $C_2(\omega)=\sqrt{2+\omega}+\sqrt{w^2/5}+\sqrt{\omega/5}$, and it satisfies 	$\lim_{\omega\downarrow 0}C_2(\omega)=\sqrt{2}$.
\end{theorem}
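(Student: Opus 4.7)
My strategy exploits the fact that, for deterministic $\tau$, the posterior factorizes across coordinates, so I can decompose
\[
\|\theta-\theta^*\|^2 = \sum_{i\in S^c}\theta_i^2 + \sum_{i\in S}(\theta_i-\theta^*_i)^2,
\]
where $S=\{i:\theta^*_i\neq 0\}$ has cardinality $s$, and analyze the two sums separately. For each sum I would first bound the joint expectation under $E^*$ and the posterior, obtaining the sharp leading constant $2+\omega$ for the null sum and a lower-order contribution from the signal sum. I would then convert this moment bound into the stated posterior tail bound by a Chebyshev-type concentration argument, available because the summands are independent under the posterior. The slack $\sqrt{\omega}$ inside $C_1(\omega)=\sqrt{2+\omega}+\sqrt{\omega}$ is precisely the room a variance bound of order $s\log^2(n/s)$ needs to push past the mean.

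The technical heart is the null-coordinate sum. For each $i\in S^c$ and a small $\delta>0$, I would split $E^\pi[\theta_i^2\mid y_i]$ into contributions from $\{|\theta_i|\le\delta\}$ (trivially $\le\delta^2$) and from $\{|\theta_i|>\delta\}$. I would bound the latter by combining the lower estimate $\int e^{-(y_i-u)^2/2}\pi(u\mid\tau)\,du\gtrsim \pi(0\mid\tau)e^{-y_i^2/2}$ on the normalizing constant with the polynomial upper bound $\pi(u\mid\tau)\lesssim \tau^{\alpha-1}|u|^{-\alpha}$ in the numerator; the resulting expression has the form $e^{y_i^2/2}\tau^{\alpha-1}/\pi(0\mid\tau)$ times a slowly growing factor in $|y_i|$. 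The hypothesis $\tau^{\alpha-1}\prec\{(s/n)\log(n/s)\}^\alpha$ is exactly what calibrates this factor so that the effective threshold becomes $t^2=(2+\omega)\log(n/s)$: on $\{|y_i|<t\}$ the coordinatewise bound sums to $o(s\log(n/s))$, while on $\{|y_i|\geq t\}$ the Gaussian tail $P(|y_i|\geq t)\lesssim (s/n)^{1+\omega/2}$ combined with $y_i^2\le t^2+O(1)$ contributes at most $(2+\omega)\,s\log(n/s)(1+o(1))$ after summing over the $n-s$ null indices. The lower hypothesis $\tau^{\alpha-1}\geq (s/n)^c\sqrt{\log(n/s)}$ enters here to secure enough prior mass at $0$ relative to the signal tail.

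For the signal coordinates the analysis is easier: the Gaussian likelihood pins the posterior within $O(1)$ of $y_i$ by a direct moment computation using the polynomial tail, and $E^*(y_i-\theta^*_i)^2=1$ gives a total contribution of $O(s)=o(s\log(n/s))$, which together with the null estimate yields (\ref{mainresult0}). The $L_1$ inequality (\ref{mainresult1}) follows an entirely parallel route: the signal coordinates contribute $\sqrt{2/\pi}\,s$, the null-exceedance term contributes $(\sqrt{2+\omega}+o(1))\,s\sqrt{\log(n/s)}$, and the extra pieces in $C_2(\omega)$ arise from the $L_1$ analogue of the Chebyshev slack and from a secondary null contribution, whose control requires the strengthened hypothesis $\tau^{\alpha-1}\prec(s/n)^\alpha\{\log(n/s)\}^{(\alpha+1)/2}$ (since one cannot afford as much posterior spread near zero when measuring in $L_1$). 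The main obstacle is extracting the sharp multiplicative constant $2$ in the null-coordinate analysis: one must match the Gaussian tail $e^{-t^2/2}$ with the prior tail at $t$ via the essentially unique choice of $\tau$, rather than the cruder prior-versus-likelihood comparison that suffices for rate-minimaxity alone.
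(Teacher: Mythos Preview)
Your decomposition into null and signal coordinates matches the paper's, but your attribution of the sharp constant is reversed, and this points to a genuine gap. In the paper's proof the $\sqrt{2+\omega}$ factor comes from the \emph{signal} block and the $\sqrt{\omega}$ from the \emph{null} block, not the other way around. Your signal analysis --- ``the Gaussian likelihood pins the posterior within $O(1)$ of $y_i$'' --- is only valid when $|\theta_i^*|$ lies well above the effective threshold $t\approx\sqrt{2c\log(n/s)}$; when $|\theta_i^*|$ is near or below $t$ (precisely the minimax-hardest case) the posterior can shrink $\theta_i$ toward $0$, contributing $(\theta_i-\theta_i^*)^2\approx(\theta_i^*)^2$ up to $(2+\omega)\log(n/s)$ per coordinate, and summing over $s$ signals produces the $\sqrt{2+\omega}$ term. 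Conversely, your null computation overstates the mean: with $t^2=(2+\omega)\log(n/s)$ the Gaussian tail $P(|y_i|\geq t)\asymp(s/n)^{1+\omega/2}/t$ multiplied by $(n-s)$ and by $t^2$ gives only $o(s\log(n/s))$, not $(2+\omega)s\log(n/s)$.

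There is a second, structural obstruction: even with corrected moments, the Chebyshev step needs $s\to\infty$. In the boundary-signal case each $(\theta_i-\theta_i^*)^2$ has variance of order $\log^2(n/s)$, so the signal sum has standard deviation $\asymp\sqrt{s}\log(n/s)$, which is not $o(s\log(n/s))$ unless $s\to\infty$; but the theorem is stated for all $s=o(n)$, including bounded $s$. The paper avoids moments altogether and follows the Le Cam--Birg\'e testing route: on each block it builds explicit tests $\phi$ (an $L_2$ ball test around $\vartheta_1^*$ for signals; a maximum over sparse-subset $L_2$ tests for nulls) with exponentially small Type I/II errors, couples these with a lower bound on $m(D_n)/f^*(D_n)$, and applies a Barron-type lemma to obtain a posterior tail bound of the form $\exp\{-c_0 s\log(n/s)\}$. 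The paper in fact remarks that the Markov/moment approach of prior work only delivers diverging multiplicative constants $M_n\to\infty$, and that the $L_1$ result in particular is not readily accessible through posterior mean--variance quantification.
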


We have several comments on this theorem. 
There are two conditions for global shrinkage parameter $\tau$. The first condition says that $\tau$ shall not be too small. 
An overly small $\tau$ may cause over-shrinkage for the true nonzero $\theta_i$'s.
This condition also echoes our heuristic argument that $s/n\asymp\tau^{\alpha-1}(2\log(n/s))^{-\alpha/2}$ in the previous paragraph.
The second condition says that $\tau$ shall not be too large, since an overly large $\tau$ fails to impose sufficient shrinkage effect on the zero $\theta_i$'s.
To satisfy both conditions of $\tau$, we need to choose $\alpha\in(1,1+\omega/2)$. 
The theorem provides an $L_1$ contraction result as well. Convergence in $L_1$, comparing with $L_2$ convergence, requires stronger posterior variation control for the zero $\theta_i$'s. Note that $L_1$ contraction result can not be easily derived from the quantification of the  posterior mean bias and posterior variance (which is the proof technique adopted by \citep{VanKV2014,bai2017inverse}). Our proof technique, which directly studies the entry-wise posterior contraction for all the zero $\theta_i$'s, enables us to perform $L_1$ contraction analysis.
There are several insights obtained from the theoretical results of this theorem. First, for any polynomially decaying prior, with proper choice of global shrinkage $\tau$, the order of Bayesian contraction
rate is exactly $O(\{s\log(n/s)\}^{1/2})$.
Note that \citep{VanKV2014, GhoshC2014,van2016conditions} only showed that the order of contraction is $O(M_n\{s\log(n/s)\}^{1/2})$ with $M_n\rightarrow \infty$, since their results
are based on Markov’s inequality. In contrast, our proof is based on constructing hypothesis testing that can test the true parameter versus balls of alternatives with exponentially small error probabilities.
 Secondly, the multiplicative constant of the Bayesian contraction rate is positively related to the polynomial order of $\pi_0(\cdot)$. 
Therefore, to obtain sharply $L_2$/$L_1$ minimax contraction\footnote{The minimax $L_1$ contraction rate is $s\sqrt{(2+o(1))\log(n/s)}$ \citep{DonohoJ1994,zhang2012minimax}}, i.e. the $\omega$ can be sufficiently small, a sufficient choice is to let the polynomial order of $\pi_0$ be very close to 1.

Since the logarithmic term $\log(n/s)$ is asymptotically dominated by $(n/s)^{c}$ for any small constant $c>0$, the conditions of $\tau$ in Theorem \ref{main} can be simplified (by replacing $\log(n/s)$ with arbitrarily small exponent of $(n/s)$), and we obtain the following corollary.
\begin{corollary}\label{coro}
	If $\alpha\leq 1+\omega/2$ and $\tau^{\alpha-1}\asymp (s/n)^c$ for some $c\in[\alpha, 1+\omega/2)$,  then (\ref{mainresult0}) and (\ref{mainresult1}) hold.
\end{corollary}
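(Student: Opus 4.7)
The corollary is a clean reformulation of Theorem \ref{main}: it replaces the delicate polylogarithmic factors in the bounds on $\tau$ with a single polynomial exponent, exploiting the fact that $n/s\to\infty$ under [C.1]. My plan is therefore to show directly that the hypothesis $\tau^{\alpha-1}\asymp(s/n)^{c}$ with $c\in[\alpha,1+\omega/2)$ implies both the lower and upper bounds on $\tau^{\alpha-1}$ required in Theorem \ref{main}, and then quote the theorem to obtain (\ref{mainresult0}) and (\ref{mainresult1}).

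For the lower bound, I would pick the exponent required by the theorem to be some $c'\in(c,1+\omega/2)$; this interval is nonempty because the corollary's hypothesis gives $c<1+\omega/2$. Then
\[
\frac{\tau^{\alpha-1}}{(s/n)^{c'}\{\log(n/s)\}^{1/2}}\asymp\frac{(n/s)^{c'-c}}{\{\log(n/s)\}^{1/2}}\longrightarrow\infty,
\]
so $\tau^{\alpha-1}\geq(s/n)^{c'}\{\log(n/s)\}^{1/2}$ for all sufficiently large $n$, verifying the first condition in Theorem \ref{main}. For the upper bounds, since $c\geq\alpha$ and $s/n\to 0$,
\[
\frac{\tau^{\alpha-1}}{(s/n)^{\alpha}\{\log(n/s)\}^{k}}\asymp\frac{(s/n)^{c-\alpha}}{\{\log(n/s)\}^{k}}\longrightarrow 0
\]
for every $k>0$. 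Taking $k=\alpha$ gives $\tau^{\alpha-1}\prec\{(s/n)\log(n/s)\}^{\alpha}$, which yields (\ref{mainresult0}), while taking $k=(\alpha+1)/2$ gives $\tau^{\alpha-1}\prec(s/n)^{\alpha}\{\log(n/s)\}^{(\alpha+1)/2}$, which yields (\ref{mainresult1}).

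Since the whole argument reduces to the dominance of any positive power of $n/s$ over any polylogarithmic factor, there is no substantial analytical obstacle; the only detail worth checking is that the interval $(c,1+\omega/2)$ from which the theorem's exponent $c'$ is drawn is nonempty, which is immediate from the corollary's assumption. All the nontrivial work is contained in Theorem \ref{main} itself.
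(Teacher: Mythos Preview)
Your proposal is correct and follows exactly the approach the paper indicates: the paper merely remarks before the corollary that ``the logarithmic term $\log(n/s)$ is asymptotically dominated by $(n/s)^{c}$ for any small constant $c>0$'' and does not spell out further details, so your verification of Theorem~\ref{main}'s two conditions via polynomial-dominates-log comparisons is precisely what is intended.
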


Theorem \ref{main} and Corollary \ref{coro} suggest an optimal choice for the global shrinkage as $\tau \asymp (s/n)^{(\alpha+\delta)/(\alpha-1)}$ for some non-negative small value $\delta$. 
In practice, when true sparsity is always unknown, this theoretical suggestion is not useful.
Therefore in Section \ref{secada}, we will discuss a full Bayesian approach which is adaptive to unknown sparsity. 
To end this section, we consider a possible alternative choice as  $\tau\asymp (1/n)^{(\alpha+\delta)/(\alpha-1)}$,
that is to substitute the unknown $s$ with 1.
Obviously, if we assume that the sparsity $s$ is a fixed quantity, this choice of $\tau$ is of the same order 
of the optimal suggestion. But if $s$ is increasing with $n$, this leads to suboptimal upper bound for the contraction rate.
The detail is provided in the next theorem.

\begin{theorem}\label{suboptimal}
	If $\tau^{\alpha-1}\geq (1/n)^c\{\log(n/s)\}^{1/2}$ for some $c\in(0, 1+\omega/2)$, and 
	$\tau^{\alpha-1}\prec(s/n)^\alpha\{\log(n/s)\}^{(\alpha+1)/2}$, then
	\begin{equation}\begin{split}\label{mainresult2}
	&	\lim E^*(\pi[\|\theta-\theta^*\|\geq C_1(\omega)\{s\log(n)\}^{1/2}|D_n])=0,\\
	&\lim E^*(\pi[\|\theta-\theta^*\|_1\geq sC_2(\omega)\{\log(n)\}^{1/2}|D_n])=0,\\
	\end{split}
	\end{equation}
	for the same functions $C_1(\omega)$ and $C_2(\omega)$ used in Theorem \ref{main}.
\end{theorem}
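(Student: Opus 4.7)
The plan is to adapt the proof of Theorem \ref{main} essentially verbatim, tracking the effect of weakening the lower bound on $\tau^{\alpha-1}$ from $(s/n)^c\{\log(n/s)\}^{1/2}$ to $(1/n)^c\{\log(n/s)\}^{1/2}$. This change leaves the upper-bound condition on $\tau$ untouched, so the mechanism that prevents the zero coordinates from leaking mass remains the same; what does change is the heuristic threshold at which the posterior $\pi(\theta_i|y_i)$ switches from being anchored at $0$ to being anchored at $y_i$.

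First, I would import without modification the portion of the proof of Theorem \ref{main} that handles the zero coordinates of $\theta^*$. Because the hypothesis $\tau^{\alpha-1}\prec(s/n)^\alpha\{\log(n/s)\}^{(\alpha+1)/2}$ is identical in both theorems, the entry-wise concentration of $\pi(\theta_i|y_i)$ near $0$ for indices with $\theta_i^*=0$, together with the union bound over the $n-s$ such entries, delivers the same control of both the $L_2$ and the $L_1$ contributions from the null coordinates. These contributions remain of order $o(\{s\log(n/s)\}^{1/2})$ and $o(s\{\log(n/s)\}^{1/2})$ respectively, hence negligible relative to the target rates $\sqrt{s\log n}$ and $s\sqrt{\log n}$.

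Second, I would redo the threshold computation sketched before Theorem \ref{main} for the nonzero coordinates under the new, weaker lower bound on $\tau^{\alpha-1}$. The balance $\exp\{-t^2/2\}\asymp \tau^{\alpha-1}t^{-\alpha}$ now yields $t\sim\sqrt{2\log n}$ instead of $\sqrt{2\log(n/s)}$, because $\tau^{\alpha-1}$ is permitted to be as small as $n^{-c}$ up to log factors. Consequently the worst-case posterior-mean bias at each nonzero $\theta_i$ is of order $\sqrt{(2+\omega)\log n}$, while the per-coordinate posterior spread around $y_i$ is still $O(1)$ as in Theorem \ref{main}. Summing over the $s$ nonzero entries produces an $L_2$ contribution bounded above by $C_1(\omega)\sqrt{s\log n}$ and an $L_1$ contribution bounded by $sC_2(\omega)\sqrt{\log n}$, matching \eqref{mainresult2}.

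To make these heuristics rigorous I would reuse the hypothesis-testing construction of Theorem \ref{main}: for each nonzero coordinate build a test separating $\theta_i^*$ from $\{|\theta_i-\theta_i^*|\geq r\}$ with exponentially small type-I and type-II error, where $r$ is scaled to $\sqrt{(2+\omega)\log n}$, and then control the denominator of the posterior with the same prior-mass lower bound argument. The main technical obstacle I expect is the bookkeeping in this aggregation: inflating the radii from $\sqrt{\log(n/s)}$ to $\sqrt{\log n}$ inflates the test error exponents by a factor $\log n/\log(n/s)$, so I must verify that the lower bound on the prior mass of a small neighborhood of the truth, which is driven by $\tau^{\alpha-1}\geq n^{-c}\{\log(n/s)\}^{1/2}$ with $c<1+\omega/2$, still dominates the summed testing error. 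Granted this check, the null-coordinate and nonzero-coordinate bounds combine to give \eqref{mainresult2} with the same functions $C_1(\omega)$ and $C_2(\omega)$ as in Theorem \ref{main}.
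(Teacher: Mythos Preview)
Your proposal is correct and matches the paper's own approach: the paper explicitly states that the proof of Theorem \ref{suboptimal} is similar to that of Theorem \ref{main} and omits it. Your plan---carry over Part II (zero coordinates) unchanged since the upper-bound condition on $\tau^{\alpha-1}$ is the same, and rerun Part I (nonzero coordinates) with the contraction radius inflated from $\sqrt{(2+\omega)s\log(n/s)}$ to $\sqrt{(2+\omega)s\log n}$ so that the test-error exponent $\exp\{-\kappa s\log n\}$ dominates the prior-mass lower bound $[\tau^{\alpha-1}]^{-s}\leq n^{cs}\{\log(n/s)\}^{-s/2}$ when $c<1+\omega/2$---is exactly the modification the paper has in mind.
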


The proof of this theorem is similar to the proof of Theorem \ref{main}, hence is omitted in this manuscript. 
This result allows one to choose $\tau$ to be of order $(1/n)^{(\alpha+\delta)/(\alpha-1)}$ for any constant $\delta>0$, 
and the resultant $L_2$ contraction rate is of order $\{s\log(n)\}^{1/2}$. 
The rate $\{s\log(n)\}^{1/2}$ is considered to be suboptimal in the literature.
When comparing $C_1(\omega)\{s\log(n)\}^{1/2}$ versus $C_1(\omega)\{s\log(n/s)\}^{1/2}$, we have that (1) the two rates are asymptotically the same when $\log s\prec \log n$ (i.e., the sparsity $s$ increases slower than polynomial
rate); (2) the former one is of the same order with the latter, but has a large multiplicative constant, when $s\asymp n^c$ for some $c\in(0,1)$;
(3) the former one is of strictly greater order, when $\log s\sim \log n$ (e.g., $s=n/\log n$). Note here we only compare the {\em upper bound} of posterior contraction rates obtained by Theorems \ref{main} and \ref{suboptimal}, thus it is not rigorous to claim that the prior specification in Theorem \ref{suboptimal} leads to suboptimal posterior convergence.

\section{Adaptive Bayesian inference}\label{secada}

In this section, we now consider that the information of $s$ is not available, 
hence adaptive ways to determine the global shrinkage $\tau$ are necessary.
In Bayesian paradigm, there are at least two popular approaches to handle the hyperparameters: 
one is the empirical Bayes, i.e., to maximize the marginal likelihood of data, 
and the other one is the full Bayesian approach, i.e., to assign a 
prior distribution on the hyperparameters. For example,
\citet{VanSV2017} studied both approaches for the global shrinkage parameter in the horseshoe modeling, 
and established an adaptive horseshoe Bayesian
inference with a suboptimal contraction rate of order $O(\{s\log n\}^{1/2})$.
In this work, we are particularly interested in constructing an appropriate prior
for $\tau$.

Theorem \ref{main} suggests that $\tau$ decreases to zero if $\tau$ is deterministic. 
This motivates us to design the prior $\pi(\tau)$ to be stochastically decreasing as $n$ increases.
More specifically, the distribution of prior $\pi(\tau)$ shrinks toward 0 under proper rate. In the meantime, this prior must
not shrink too fast, such that it still assigns minimal prior density around the optimal choice $\tau\asymp (s/n)^{(\alpha+\delta)/(\alpha-1)}$.
Our next theorem provides a sufficient condition for the prior $\pi(\tau)$, such that
the Bayesian sharp minimaxity still holds.

\begin{theorem}\label{adaptive}
	If $\alpha\leq 1+\omega/2$, $\pi(\tau)$ satisfies that $-\log\pi\{(s/n)^{(1+\omega/2)/(\alpha-1)}\leq \tau\leq (s/n)^{\alpha/(\alpha-1)}\}\prec s\log(n/s)$ and $-\log\pi\{\tau\geq(s/n)^{\alpha/(\alpha-1)}\}\succ s\log(n/s)$,
	and $\max_j|\theta^*_j|\leq (n/s)^{\omega/(5\alpha)}$, then
	(\ref{mainresult0}) and (\ref{mainresult1}) still hold.
\end{theorem}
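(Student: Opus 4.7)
The plan is to reduce the hierarchical model to the deterministic-$\tau$ case treated by Theorem \ref{main}, by showing that the posterior for $\tau$ concentrates on the ``good range''
\[
T_n:=\bigl[(s/n)^{(1+\omega/2)/(\alpha-1)},\ (s/n)^{\alpha/(\alpha-1)}\bigr],
\]
on which $\tau^{\alpha-1}\in[(s/n)^{1+\omega/2},(s/n)^{\alpha}]$ is exactly the admissibility window of Theorem \ref{main}/Corollary \ref{coro}. Letting $B$ denote the $L_2$-bad set of (\ref{mainresult0}) (the $L_1$ case is identical), decompose
\[
\pi(B\mid D_n)\;\leq\;\sup_{\tau\in T_n}\pi(B\mid\tau,D_n)\;+\;\pi(T_n^c\mid D_n).
\]
The first term vanishes in $E^*$-expectation by Theorem \ref{main} applied conditionally on $\tau$; uniformity over $T_n$ can be bought cheaply by a polynomial grid plus a union bound, since the constants in that proof depend only polynomially on $\tau$.

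The heart of the argument is to prove $E^*\pi(T_n^c\mid D_n)\to 0$. Writing $m(\tau)=\prod_i g(y_i\mid\tau)$ with $g(y\mid\tau)=\int\tau^{-1}\pi_0(\theta/\tau)\phi(y-\theta)\,d\theta$,
\[
\pi(T_n^c\mid D_n)\;\leq\;\frac{\int_{T_n^c}\pi(\tau)m(\tau)\,d\tau}{\pi(T_n)\inf_{\tau\in T_n}m(\tau)},
\]
and the first prior hypothesis gives $\pi(T_n)\geq\exp(-o(s\log(n/s)))$. Split $T_n^c=T_{\mathrm{hi}}\cup T_{\mathrm{lo}}$ at $\tau=(s/n)^{\alpha/(\alpha-1)}$. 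On $T_{\mathrm{hi}}$ the second prior hypothesis $-\log\pi(T_{\mathrm{hi}})\succ s\log(n/s)$, combined with an elementary uniform bound on $m(\tau)$, yields an immediately negligible contribution.

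The delicate case, and the main obstacle, is $T_{\mathrm{lo}}$: the prior imposes no quantitative restriction there, so the entire suppression must be squeezed out of the likelihood. I would perform a Laplace-type split $g(y\mid\tau)\asymp\phi(y)+\tau^{\alpha-1}|y|^{-\alpha}$, isolating a ``near-zero'' mode of size $\asymp\phi(y)$ (essentially $\tau$-free) and a ``near-$y$'' mode of size $\asymp\tau^{\alpha-1}|y|^{-\alpha}$ (which shrinks as $\tau\downarrow 0$). For the $n-s$ null coordinates, $|y_i|=O_{P^*}(\sqrt{\log n})$ forces the near-zero mode to dominate uniformly in $\tau$, so those per-coordinate ratios are $O(1)$ and harmless. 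For each of the $s$ signal coordinates $j$, $y_j\approx\theta_j^*$ and the hypothesis $|\theta_j^*|\leq(n/s)^{\omega/(5\alpha)}$ is calibrated so that the near-$y$ mode dominates for every $\tau\in T_n$ while decaying polynomially in $\tau$ on $T_{\mathrm{lo}}$; the resulting per-signal ratio $g(y_j|\tau_{\mathrm{good}})/g(y_j|\tau_{\mathrm{lo}})$, raised to the $s$-th power, exceeds $\exp(\omega'\,s\log(n/s))$ for some $\omega'>0$, overwhelming whatever prior mass $T_{\mathrm{lo}}$ can carry. The bulk of the work, and the reason for the specific exponent $\omega/(5\alpha)$ in the signal bound, is exactly this quantitative Laplace comparison balancing $\phi(\theta_j^*)\asymp\exp(-(\theta_j^*)^2/2)$ against $\tau^{\alpha-1}|\theta_j^*|^{-\alpha}$ uniformly in the signal magnitude. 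Once $E^*\pi(T_n^c\mid D_n)\to 0$ is in hand, combining with the first summand delivers both (\ref{mainresult0}) and (\ref{mainresult1}).
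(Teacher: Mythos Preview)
Your decomposition has a genuine gap in the treatment of $T_{\mathrm{lo}}=\{\tau<(s/n)^{(1+\omega/2)/(\alpha-1)}\}$. The claimed suppression rests on the assertion that for each signal coordinate $j$ the ``near-$y$'' piece $\tau^{\alpha-1}|y_j|^{-\alpha}$ dominates the ``near-zero'' piece $\phi(y_j)$ throughout $T_n$, so that $g(y_j\mid\tau)$ is essentially proportional to $\tau^{\alpha-1}$ and hence tiny on $T_{\mathrm{lo}}$. But that domination requires $|\theta_j^*|$ to exceed roughly $\sqrt{2(1+\omega/2)\log(n/s)}$, whereas the theorem imposes only an \emph{upper} bound $|\theta_j^*|\le(n/s)^{\omega/(5\alpha)}$ and no lower bound at all. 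If some (or all) nonzero signals satisfy $|\theta_j^*|=O(1)$, then $\phi(y_j)$ is of constant order, the near-zero term dominates for every $\tau\in T_n\cup T_{\mathrm{lo}}$, and $g(y_j\mid\tau)$ is essentially flat in $\tau$ there; the $n-s$ null coordinates, for their part, \emph{prefer} small $\tau$ (since $g(y\mid\tau)\uparrow\phi(y)$ as $\tau\downarrow0$). In this regime the likelihood imposes no penalty whatsoever on $T_{\mathrm{lo}}$, the prior imposes none either, and the claim $E^*\pi(T_{\mathrm{lo}}\mid D_n)\to0$ is simply false.

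This is not a patchable technicality: posterior concentration of $\tau$ on $T_n$ genuinely fails for weak signals, yet the theorem still holds because an over-small $\tau$ only over-shrinks, which is harmless precisely when the signals being over-shrunk are themselves small. The paper's proof sidesteps the whole issue by never attempting to localize $\tau$. It applies the Le Cam--Birg\'e machinery of Lemma~\ref{lemmab} directly to the marginal posterior of $\theta$: the test $\phi(D_n)$ in (\ref{test}) is $\tau$-free, the type I/II errors are computed under the $\theta$-model alone, and the evidence lower bound $m(D_n)/f^*(D_n)$ is obtained by restricting the $\tau$-integral to $T_n$ and $\theta$ to a neighborhood of $\theta^*$. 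The condition $\max_j|\theta_j^*|\le(n/s)^{\omega/(5\alpha)}$ enters not to make the likelihood discriminate against small $\tau$, but only to ensure that the conditional prior mass $\pi(\theta\mbox{ near }\theta^*\mid\tau)$ stays above $\exp\{-(1+\omega/2+\omega/5+o(1))s\log(n/s)\}$ uniformly over $\tau\in T_n$, so that this denominator beats the type~II numerator with the correct constant.
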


The above theorem claims that under proper choice of $\pi(\tau)$, sharp minimaxity is still attainable when we choose $\alpha$ to be close to 1.
Let us first discuss the two conditions on the hyper-prior $\pi(\tau)$. The first condition requires that
$\pi(\tau)$ maintains a minimal prior probability on the optimal range $[(s/n)^{(1+\omega/2)/(\alpha-1)}, (s/n)^{\alpha/(\alpha-1)}]$.
The second condition requires that $\pi\{\tau>(s/n)^{\alpha/(\alpha-1)}\}$ rapidly decays to 0, i.e., the distribution 
$\pi(\tau)$ becomes more and more concentrated around 0.
A particular choice that satisfies these two conditions is that 
\begin{equation}\label{beta}
\tau=\tau_0^{c},\quad\tau_0\sim \mbox{Beta}(1, n)
\end{equation}
for some $c\in(\alpha/(\alpha-1), (1+\omega/2)/(\alpha-1))$.
One can easily verify that
\[
\pi\{\tau\geq(s/n)^{\alpha/(\alpha-1)}\}
=(1-(s/n)^{\alpha/(\alpha-1)/c})^{n}\sim \exp\{-s(n/s)^\delta\}, \mbox{ and}
\]
\[
\pi\{(s/n)^{(1+\omega/2)/(\alpha-1)}\leq \tau\leq (s/n)^{\alpha/(\alpha-1)}\}
\sim \exp\{-s(n/s)^{-\delta'}\}
\]
where $\delta=1-\alpha/(\alpha-1)/c$ and $\delta'=(1+\omega/2)/(\alpha-1)/c-1$
are two small positive constants.
It is worth mentioning that the Beta prior is widely used as a hyperprior in spike-and-slab 
Bayesian modeling \citep{Castillov2012,Rockova2015}. For example, a common spike-and-slab modeling assigns a prior probability $p$ for each
$\theta_i$ being selected into the model, i.e. $\pi(\theta_i\neq0)=p$, and \citet{Castillov2012}
suggested a hyperprior $p\sim \mbox{Beta}(1,4n+1)$.
In the literature, \citet{VanSV2017} proposed a hyper truncated half Cauchy prior for the global shrinkage parameter in the horseshoe modeling, that is
$\pi(\tau)\propto 1(\tau\in[1/n,1])/(1+\tau^2)$.
Both Beta modeling (\ref{beta}) and truncated half Cauchy prior have a compact support within $[0,1]$, but a
big difference between these two is that the Beta prior distribution converges to a Dirac measure at 0 as 
$n$ goes to infinity, but the truncated half Cauchy prior converges to a non-degenerated distribution which is the half Cauchy distribution
truncated within [0,1].

In the above theorem, we also impose an additional technical condition on the magnitude of the true nonzero
$\theta_j$ such that $\log(\max|\theta_j^*|)/(\log(n/s))\leq \omega/5\alpha$. 
Note that $\log(n/s)\rightarrow\infty$, hence this condition still allows that the true signal strength to grow.
If the true signal grows sub-polynomially fast, i.e., $\log(\max|\theta_j^*|)=o(\log(n/s))$, then $\omega$ can be arbitrarily small and we obtain sharply minimax contraction; 
If the true signal grows polynomially fast, i.e., $\max|\theta_j^*|\asymp (n/s)^a$ for some constant $a>0$, then
Theorem \ref{adaptive} still ensures that the rate of contraction is $O(\{s\log(n/s)\}^{1/2})$, despite a larger multiplicative constant.
This constraint on $|\theta_j^*|$ essentially is equivalent to that the prior density on true parameter (i.e., $\pi(\theta^*)$)
is bounded away from 0. Similar conditions, which require that the prior is ``thick'' around true parameter value $\theta^*$,
are regularly used in Bayesian theoretical literature \citep[e.g.,][]{Jiang2007, KleijnV2006, 
	GhosalGV2000, GhosalV2007}.
As mentioned in Section \ref{intro}, Dirichlet-Laplace prior \citep{BhattacharyaPPD2015} also imposes
a upper bound constraint on the magnitude of $\|\theta^*\|$,
but our condition is much weaker.

It is worth to mention that this additional condition is mainly due to the fact that the posterior
distributions among $\theta_i$'s are no longer independent when $\tau$ is subject to a prior distribution.
This condition is only sufficient, and is not appealing to theoreticians. Theorem 3.7 in \citep{VanSV2017}
showed that the Bayesian horseshoe with truncated half Cauchy prior on $\tau$ is capable to achieve
order-$(s\log n)^{1/2}$ contraction rate without any assumption on $|\theta^*_j|$.
Similar result can be derived here if one is only interested in a suboptimal upper bound for the posterior contraction rate:
\begin{theorem}\label{adaptive2}
	If $\alpha\leq 1+\omega/2$, the prior of $\tau$ has support on $[(1/n)^{c/(\alpha-1)},\infty)$ for some $c<1+\omega/2$, and the $\pi(\tau)$ also satisfies that $-\log\pi\{(s/n)^{(1+\omega/2)/(\alpha-1)}\leq \tau\leq (s/n)^{\alpha/(\alpha-1)}\}\prec s\log(n/s)$ and $-\log\pi\{\tau\geq(s/n)^{\alpha/(\alpha-1)}\}\succ s\log(n/s)$, then
	(\ref{mainresult2}) holds.
\end{theorem}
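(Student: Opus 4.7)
The plan is to adapt the proof of Theorem \ref{adaptive} by substituting every appeal to Theorem \ref{main} with the looser Theorem \ref{suboptimal}, thereby trading the signal-size hypothesis on $\max_j|\theta_j^*|$ for the suboptimal $\log n$ rate. The support constraint $\tau\geq(1/n)^{c/(\alpha-1)}$ with $c<1+\omega/2$ is precisely what is required for Theorem \ref{suboptimal} to be applicable conditional on $\tau$, since it ensures $\tau^{\alpha-1}\geq(1/n)^c$ uniformly across the prior support; this is the feature that lets us dispense with any assumption on the magnitudes of $\theta_j^*$.

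First, I would partition the prior support into the \emph{good} region $\mathcal{T}_g=[(1/n)^{c/(\alpha-1)},(s/n)^{\alpha/(\alpha-1)}]$ and the \emph{tail} region $\mathcal{T}_\ell=((s/n)^{\alpha/(\alpha-1)},\infty)$, and use the decomposition
\[
\pi(A\mid D_n)\leq\sup_{\tau\in\mathcal{T}_g}\pi(A\mid\tau,D_n)+\pi(\tau\in\mathcal{T}_\ell\mid D_n),
\]
where $A$ stands for either event in (\ref{mainresult2}). Because $\pi(\theta\mid\tau)$ is a product prior, the conditional posterior $\pi(\theta\mid\tau,D_n)$ retains entry-wise independence, so the hypothesis-testing arguments used to prove Theorem \ref{suboptimal} apply at each fixed $\tau\in\mathcal{T}_g$. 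Monotonicity of the entry-wise thresholds in $\tau$ upgrades this to a uniform bound, giving $E^*[\sup_{\tau\in\mathcal{T}_g}\pi(A\mid\tau,D_n)]\to0$.

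Second, I would control the tail posterior mass using the Bayes-factor representation
\[
\pi(\tau\in\mathcal{T}_\ell\mid D_n)=\frac{\int_{\mathcal{T}_\ell}p(D_n\mid\tau)\pi(\tau)\,d\tau}{\int p(D_n\mid\tau)\pi(\tau)\,d\tau},
\]
where $p(D_n\mid\tau)$ denotes the marginal likelihood at $\tau$. The denominator is lower bounded by restricting the outer integral to the optimal subrange $[(s/n)^{(1+\omega/2)/(\alpha-1)},(s/n)^{\alpha/(\alpha-1)}]$, on which the first prior condition contributes at least $\exp\{-o(s\log(n/s))\}$ prior mass, combined with an evidence lower bound that $-\log p(D_n\mid\tau)$ exceeds its true-parameter analogue by at most $O(s\log(n/s))$ on an event of high $E^*$-probability. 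The numerator is upper bounded by $\pi(\mathcal{T}_\ell)$ multiplied by a crude sub-exponential likelihood bound valid for arbitrary $\tau\in\mathcal{T}_\ell$.

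The principal obstacle is producing the evidence lower bound on $p(D_n\mid\tau)$ in the absence of any signal-size constraint on $\theta^*$. This is precisely where the slack between $\log n$ and $\log(n/s)$ is essential: the second prior condition $-\log\pi(\mathcal{T}_\ell)\succ s\log(n/s)$ strictly dominates every $O(s\log(n/s))$ contribution coming from the likelihood bounds in both numerator and denominator, which forces their difference to diverge and the resulting posterior ratio to vanish. Once this step is settled, the two contraction statements in (\ref{mainresult2}) follow in parallel by specializing $A$ to the $L_2$-ball and $L_1$-ball events, respectively.
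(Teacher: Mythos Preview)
Your proposal has the right high-level structure but contains a genuine gap in the second step, and the resolution you sketch does not work.

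The problem is your bound on $E^*\pi(\tau\in\mathcal{T}_\ell\mid D_n)$. You assert an ``evidence lower bound'' of the form $-\log p(D_n\mid\tau)\leq -\log f^*(D_n)+O(s\log(n/s))$ for $\tau$ in the optimal subrange, and a ``crude sub-exponential likelihood bound'' for the numerator, with the prior condition $-\log\pi(\mathcal{T}_\ell)\succ s\log(n/s)$ dominating both. Neither half holds as stated. First, any crude upper bound on $p(D_n\mid\tau)$ such as $(2\pi)^{-n/2}$, compared against a lower bound built from $f^*(D_n)=(2\pi)^{-n/2}\exp\{-\|\epsilon\|^2/2\}$, leaves an uncontrolled factor $\exp\{\|\epsilon\|^2/2\}\approx\exp\{n/2\}$, which is far larger than $\exp\{O(s\log(n/s))\}$ and is not killed by the prior condition. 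Second, and more fundamentally, the evidence lower bound on $p(D_n\mid\tau)$ for small $\tau$ passes through the prior mass near $\theta^*$, which contributes $\sum_{j\in\xi^*}\alpha\log|\theta_j^*|$ to the log-evidence; this is exactly why Theorem~\ref{adaptive} imposes $\max_j|\theta_j^*|\leq(n/s)^{\omega/(5\alpha)}$, and the ``slack between $\log n$ and $\log(n/s)$'' does nothing to remove it. The support lower bound on $\tau$ plays no role in your treatment of this term.

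The paper's route avoids bounding $\pi(\tau\in\mathcal{T}_\ell\mid D_n)$ altogether. It follows the $B_n$/$C_n$ decomposition and testing argument of Theorem~\ref{adaptive}, but for the piece where Theorem~\ref{adaptive} invokes the signal-size bound (the denominator in display~(\ref{tfun8}) and the analogous step for $C_n$), it substitutes the ratio technique from Part~I of Theorem~\ref{main}: there the prior density at $\theta^*$ cancels between numerator and denominator (see (\ref{bound0})--(\ref{bound1})), and what survives is the factor $[\tau^{\alpha-1}]^{-s}$, which is controlled \emph{uniformly over the entire prior support} by the hypothesis $\tau^{\alpha-1}\geq(1/n)^c$. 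Integrating this uniform ratio bound against $\pi(d\tau)$ yields the needed inequality for the marginal prior without any reference to $|\theta_j^*|$, at the price of the $\log n$ rate. Your decomposition short-circuits this cancellation: by isolating $\pi(\tau\in\mathcal{T}_\ell\mid D_n)$ as a stand-alone posterior probability you are forced to compare marginal likelihoods across $\tau$ directly, and that comparison is precisely where the $\theta^*$-dependence and the $n$-dimensional noise factor reappear uncancelled.

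A secondary issue: your first term requires $E^*[\sup_{\tau\in\mathcal{T}_g}\pi(A\mid\tau,D_n)]\to0$, which is strictly stronger than the pointwise-in-$\tau$ conclusion of Theorem~\ref{suboptimal}; the expectation--Fubini step in Lemma~\ref{lemmab} does not commute with the supremum, and ``monotonicity of thresholds'' does not bridge that gap without further argument.
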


The proof of this theorem is a combination of the proof of Theorems \ref{main} and \ref{adaptive}, hence is omitted.
To construct a $\pi(\tau)$ that satisfies the conditions in this theorem, we can simply modify the above Beta modeling
as: $\tau=\tau_0^{c}$ with $c\in(\alpha/(\alpha-1), (1+\omega/2)/(\alpha-1))$ and $\pi(\tau_0)\propto B(\tau_0; 1, n)1(\tau_0\in[1/n,1])$, where $B(\cdot; a,b)$ is
the density of a Beta distribution.

\section{Simulation and data anlaysis}\label{simu}
This section, we will demonstrate two simulation studies to justify our theoretical discoveries,
as well as a cancer data application.
In the first simulation, we assume that the sparsity $s$ is known in advance, and we empirically compare 
difference choices of the polynomial order of the prior tail.
The theorems presented in Section \ref{secmain} assert that it is sufficient to assign a very small $\alpha$; 
and we would like to use numerical studies to evaluate how good is such a choice, and how necessary is this small-$\alpha$ condition.
In the second simulation, we study the performance of the adaptive prior proposed by
Theorem \ref{adaptive} when $s$ is unknown, and compare it to the adaptive horseshoe prior proposed by \citep{VanSV2017}.
We will also present a prostate cancer real data Bayesian analysis.
\vskip 0.1in
{ \noindent \it Simulation I: Comparison between different choices of polynomial order}
\vskip 0.1in
In this simulation, to study the asymptotic behavior, we let the data dimension increases as $n=50$, 100, 500, 1000 and the sparsity $s$ equals to the rounded value of ${n}^{1/2}$.
The nonzero coefficients are chosen to be $\theta^*_i=\{t\log(n/s)\}^{1/2}$ for $1\leq i\leq s$, where $t=1.2, 2.2, 4.2$ and 6.2. These different choices of $t$ represent
a range of levels of signal strength. To implement a class of polynomial priors with difference tail orders, we let $\pi_0$ be the $t$ distribution with degree of freedom $\alpha-1$, i.e.
\begin{equation}\label{tprior}
\theta_i\sim \mbox{N}(0,\lambda_i^2\tau^2);\,
\lambda_i^2\sim \mbox{IG}((\alpha-1)/2, (\alpha-1)/2),
\end{equation}
and $\tau$ follows a deterministic choice $\tau=(s/n)^c$. This leads to a simple Gibbs update
\begin{equation}\label{gibbs}
\lambda_j^2\sim \mbox{IG}(\frac{\alpha}{2}, \frac{\alpha-1}{2}+\frac{\theta_i^2}{2\tau^2}),\,
\theta_i\sim \mbox{N}((1+\frac{1}{\lambda_i^2\tau^2})^{-1}y_i, (1+\frac{1}{\lambda_i^2\tau^2})^{-1}).
\end{equation}
We consider six different choices of prior modeling:
(1) $\alpha=1.1$, $c=(\alpha+0.05)/(\alpha-1)$;
(2) $\alpha=2.1$, $c=(\alpha+0.05)/(\alpha-1)$;
(3) $\alpha=3.1$, $c=(\alpha+0.05)/(\alpha-1)$;
(4) $\alpha=2.1$, $c=1/(\alpha-1)$;
(5) $\alpha=3.1$, $c=1/(\alpha-1)$;
(6) horseshoe prior with global shrinkage $\tau=(s/n)\{\log(n/s)\}^{1/2}$.
In the above five $t$-prior specifications, there are two choices for global shrinkage $\tau$. One is 
$(s/n)^{(\alpha+0.05)/(\alpha-1)}$, which satisfies the upper bound condition on $\tau$ in Theorem \ref{main}.
By our heuristic arguments in Section \ref{secmain}, under such prior specification, $\theta^*_i\approx\{2(\alpha+0.05)\log(n/s)\}^{1/2}$ posts
the most difficult problem. The other choice is $(s/n)^{1/(\alpha-1)}$ which satisfies the lower bound condition on $\tau$ in Theorem \ref{main}.
Note that \citet{GhoshC2014} claimed that an optimal choice for $\tau$ is $[(s/n)\{\log(n/s)\}^{1/2}]^{1/(\alpha-1)}$. The difference
between $(s/n)^{1/(\alpha-1)}$ and $[(s/n)\{\log(n/s)\}^{1/2}]^{1/(\alpha-1)}$ is only a logarithmic 
term. The horseshoe prior has a polynomial tail with order $\alpha=2$, and the choice of its $\tau$ follows the suggestion of \citep{VanKV2014}.
And asymptotically, this horseshoe prior is almost the same to $t$-distribution with $\alpha=2.1$ and $c=1/(\alpha-1)$.
All simulation results are based on the average of 100 replications.

In Figure \ref{order}, we compare the posterior contraction among these 6 priors. We estimate their posterior probability
$\pi(\|\theta-\theta^*\|^2\geq 2.2s\log(n/s)|D_n)$ by posterior samples, and plot this probability with respect to the different
$n$ and $t$ values. For minimax Bayesian procedure, this probability converges to 0 as $n$ increases, regardless of the magnitude of $\theta^*$.
The figure clearly indicates that $\alpha=1.1$ has the best performance.
The posterior probability always decreases toward 0 for all different $t$'s.
For the rest 5 priors, their posteriors don't contract into the $\{2.2s\log(n/s)\}^{1/2}$-neighborhood
for some value of $t$. It is worth to mention that for the $t$ prior with $\alpha=1.1$, the case $t=2.2$
leads to the slowest convergence, as the red curve is decreasing very slowly. This is because,
as we discussed, $t=2.2$ corresponds to the most difficult scenario for $\alpha=1.1$.
Besides, the plots of horseshoe and $t$-distribution with $\alpha=2.1$ and $c=1/(\alpha-1)$ have very
similar patterns, since 
these two prior specifications are almost equivalent in terms of their tail behaviors.

\begin{figure}[htbp]
	\begin{center}
		\includegraphics[width=13cm]{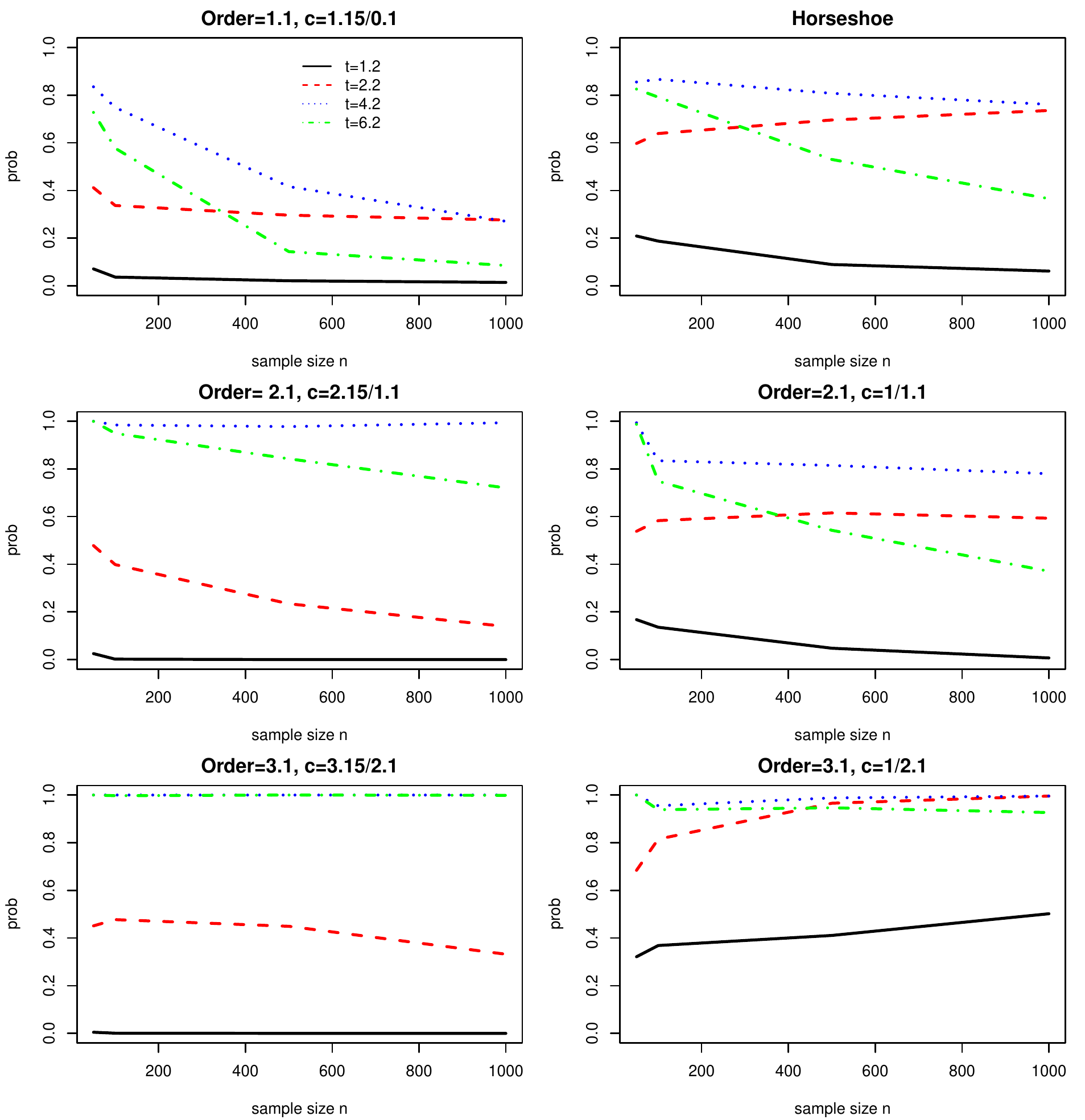}
		\caption{
			Posterior contraction of the 6 prior specifications.
		}\label{order}
	\end{center}
\end{figure}

In Figure \ref{err}, we present the some comparisons of the posterior mean of squared $L_2$ error $E(\|\theta-\theta^*\|^2|D_n)$ and 
posterior mean of $L_1$ error $E(\|\theta-\theta^*\|_1|D_n)$. 
As a reference for the comparison, we also plot the curves corresponding the the minimax squared $L_2$ and $L_1$ errors, namely,
$2s\log(n/s)$ and $s\{2\log(n/s)\}^{1/2}$ respectively.
Note that when $s={n}^{1/2}$, the suboptimal contraction rate $2s\log(n)=2(2s\log(n/s))$ and $s\{2\log n\}^{1/2}=\{2\}^{1/2}s\{2\log (n/s)\}^{1/2}$.
When $t=1.2$, i.e., signals are weak, the $L_2$ errors of all 4 priors don't exceed the minimax rate. 
This is not surprising, because under weak signals, any method that imposes enough shrinkage effect, including the
naive estimator $\widehat\theta=0$,
will induce an $L_2$ error that is smaller than minimax error. It also shows that the $t$-prior with $\alpha=2.1$, $c=(\alpha+0.05)/(\alpha-1)$ does have a
better $L_2$ error than $t$-prior with $\alpha=1.1$ under weak signals. 
When $t=2.2$, i.e.,the signal strength is in the boundary case, horseshoe and $t$-prior with $c=1/(\alpha-1)$ begin to exceed the minimax $L_2$ error, while
the two $t$-priors with $c=(\alpha+0.05)/(\alpha-1)$ have $L_2$ errors that is almost the same as, but slightly higher than, the minimax error.
When $t=4.2$, i.e., signals are strong, $t$-prior with $\alpha=1.1$ is the only one that achieves asymptotic sharp minimaxity.
When $t$ is even larger (which is not presented in the Figure \ref{err}), the $L_2$ error of $t$-prior with $\alpha=1.1$ will be much smaller than the
minimax rate, but the $L_2$ errors for the rest three are much larger than the minimax rate.
In summary, a small polynomial order universally ensures that the $L_2$ estimation error is asymptotically
 bounded by the minimax rate. 
As for the error rates under $L_1$ norm, it is much more sensitive to small variations in the coordinates
than the $L_2$ error. 
The choice $\tau=(s/n)^{(\alpha)+0.05/(\alpha-1)}$ satisfies the upper bound condition on $\tau$ in Theorem \ref{main}, and as we discussed, guarantees sufficient posterior shrinkage for the zero $\theta_i$'s,
hence it leads to small $L_1$ error. This argument is consistent to our simulation results: the two priors with 
choice $c=(\alpha+0.05)/(\alpha-1)$ have much smaller $L_1$ errors. Same to the our comparison of $L_2$ errors, only the small polynomial 
order prior ($\alpha=1.1$) ensures sharp minimaxity.
Similar to Figure \ref{order}, we see that the  $t$-distribution with $\alpha=2.1$ and $c=1/(\alpha-1)$ has almost 
identical performance with the horseshoe prior.

\begin{figure}[htbp]
	\begin{center}
		\includegraphics[width=13cm]{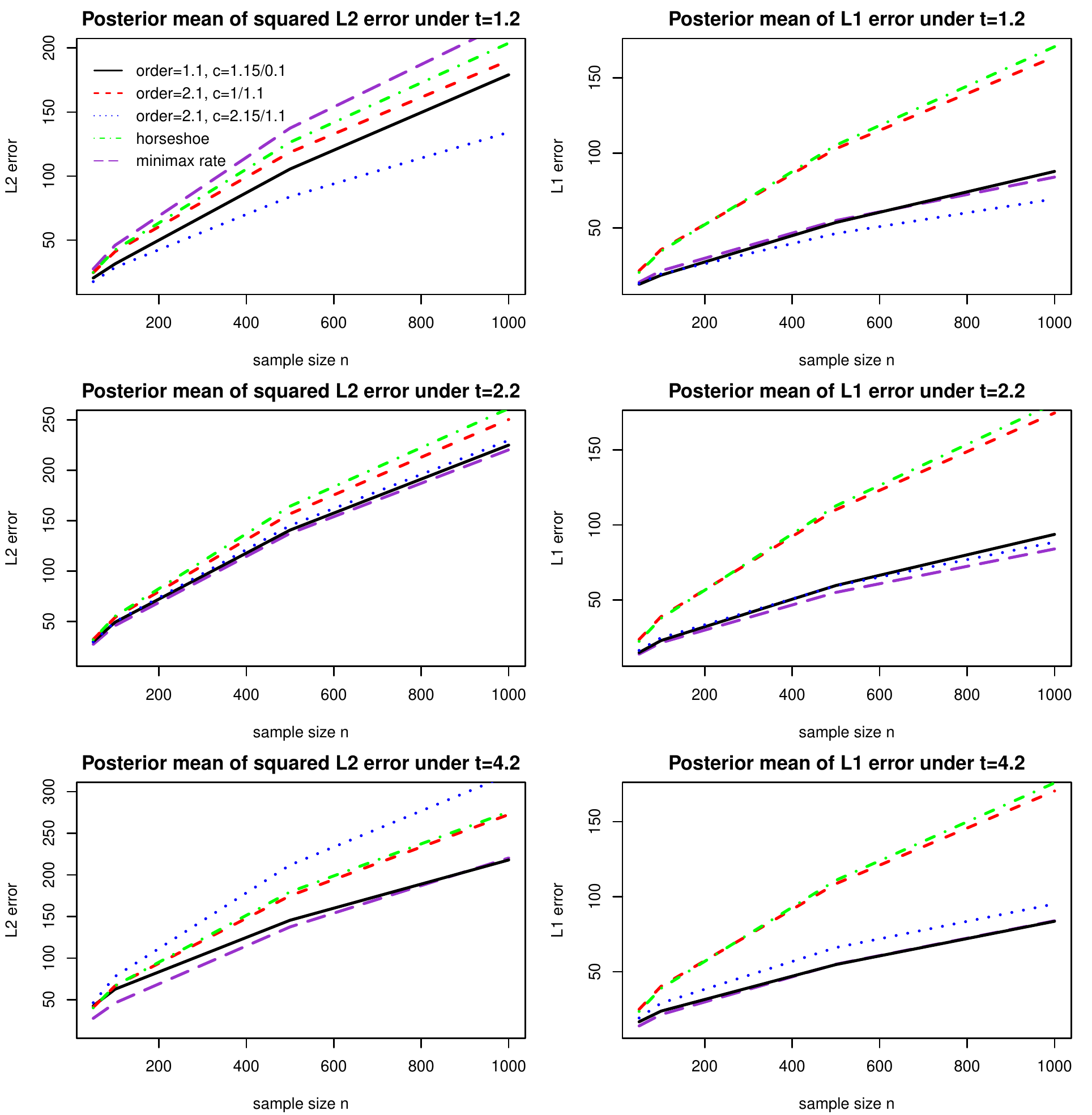}
		\caption{
			Posterior mean sqaured $L_2$ and $L_1$ errors of the 4 selective prior specifications.
		}\label{err}
	\end{center}
\end{figure}

The above simulation results successfully demonstrate the sharpness of the Bayesian 
minimax contraction when the polynomial order $\alpha$ of the prior is close to 1. 
However, there are still some discrepancies between the displayed finite-sample behaviors 
and the Bayesian hard thresholding phenomenon described in Section \ref{secmain}.
According to the Bayesian hard thresholding phenomenon, when $|y_i|$ is greater than the thresholding value $\sqrt{2\alpha\log(n/s)}$, its posterior 
will have one dominating mode which is approximately $\mbox{N}(y_i,1)$. This implies that when 
$t=4.2>2\alpha=2.2$, the posterior mean squared $L_2$ error or $L_1$ error of 
those nonzero $\theta_i$'s is asymptotically of order $O(s)$, which thus shall lead to a much smaller estimation error comparing with the boundary case of $t=2.2$.
But in Figure \ref{err}, there is no noticeable difference for posterior mean errors between $t=2.2$ and $t=4.2$ under the $t$ prior with $\alpha=1.1$.
This is because our asymptotic theory relies on the sparsity assumption such that $\log(n/s)\rightarrow\infty$,
while in our simulation experiments, the ratio of $n/s$ is not large enough to reflect the asymptotic behavior.
To illustrate it, Figure \ref{asy} plots the histograms of posterior $\pi(\theta_i|y_i=[3.2\log(n/s)]^{1/2})$
using $t$-prior with $\alpha=1.1$ and $\tau = (s/n)^{1.15/0.1}$, for different values of $n/s$.
As showed, the posterior does have two modes, but the mode around $y_i$ isn't the dominating one
until $n/s$ is as huge as 100,000.
In contrast, other choices of prior with different polynomial order of tail decaying, for example,
the horseshoe prior requires a much smaller value of $n/s$ to have the mode around $y_i$ be dominating.
This implies that in a small-sample real application, prior with polynomial order close to 1 is
less powerful in terms of detecting signals, i.e., yields a sparser model selection result, comparing with
the horseshoe prior.
But on the other hand, horseshoe prior specification fails to induce sufficient shrinkage effect 
for the zero $\theta_i$'s, therefore its estimation performance for the whole vector $\theta$ is
 still inferior.
For $t$-prior with $\alpha=1.1$, we also plot its posterior mean shrinkage coefficient $E(\theta_i|y_i)/y_i$
with respect to values of $c=y_i^2/\log(n/s)$ and $n/s$ in Figure \ref{sp}.
As $n/s$ tends to infinity, the posterior shrinkage does behave more and more similarly to the hard thresholding.

\begin{figure}[htbp]
	\begin{center}
		\includegraphics[width=10cm]{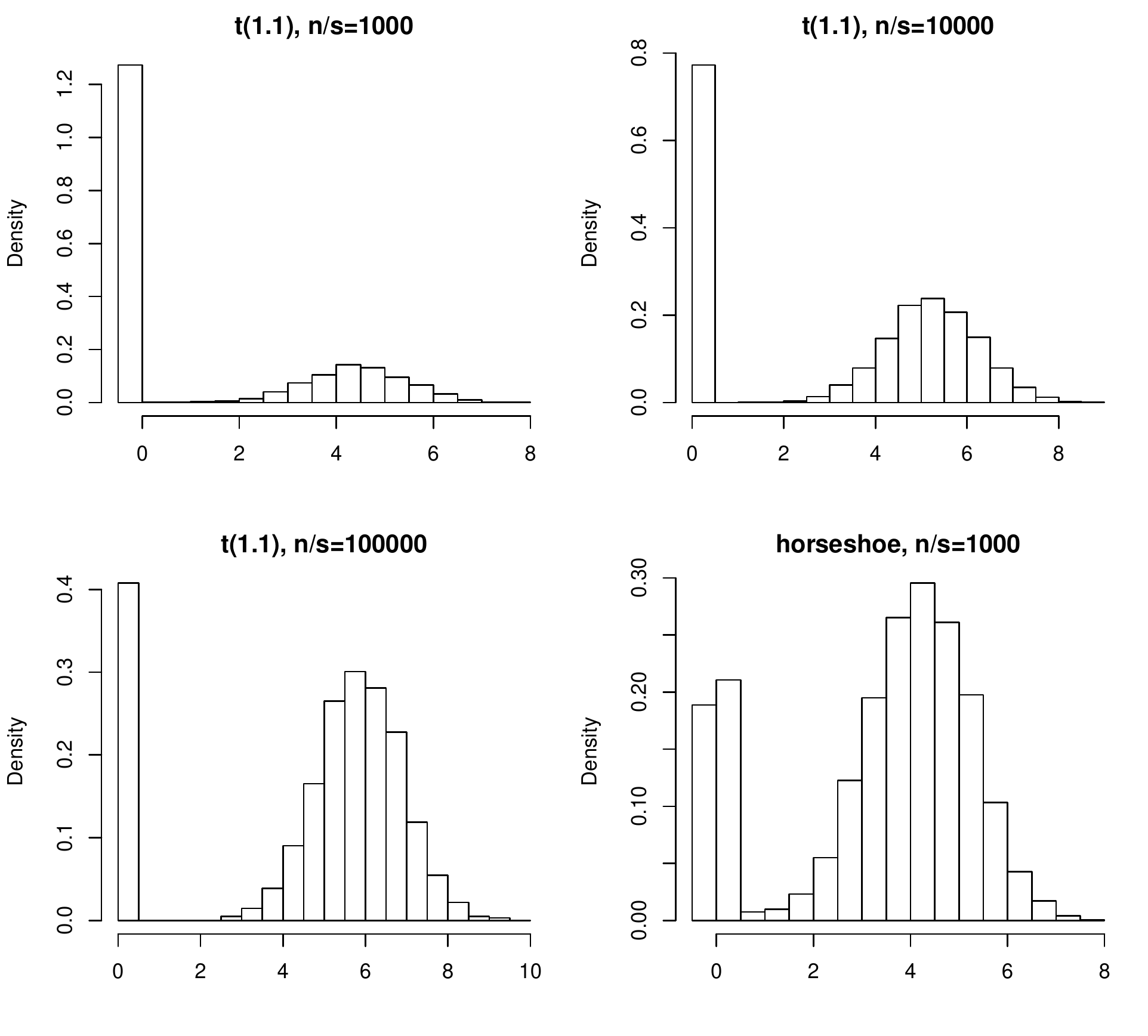}
		\caption{
			Histograms for the posterior $\pi(\theta_i|y_i=(3.2\log(n/s))^{1/2})$.
		}\label{hist}\label{asy}
	\end{center}
\end{figure}

\begin{figure}[htbp]
	\begin{center}
		\includegraphics[width=7cm]{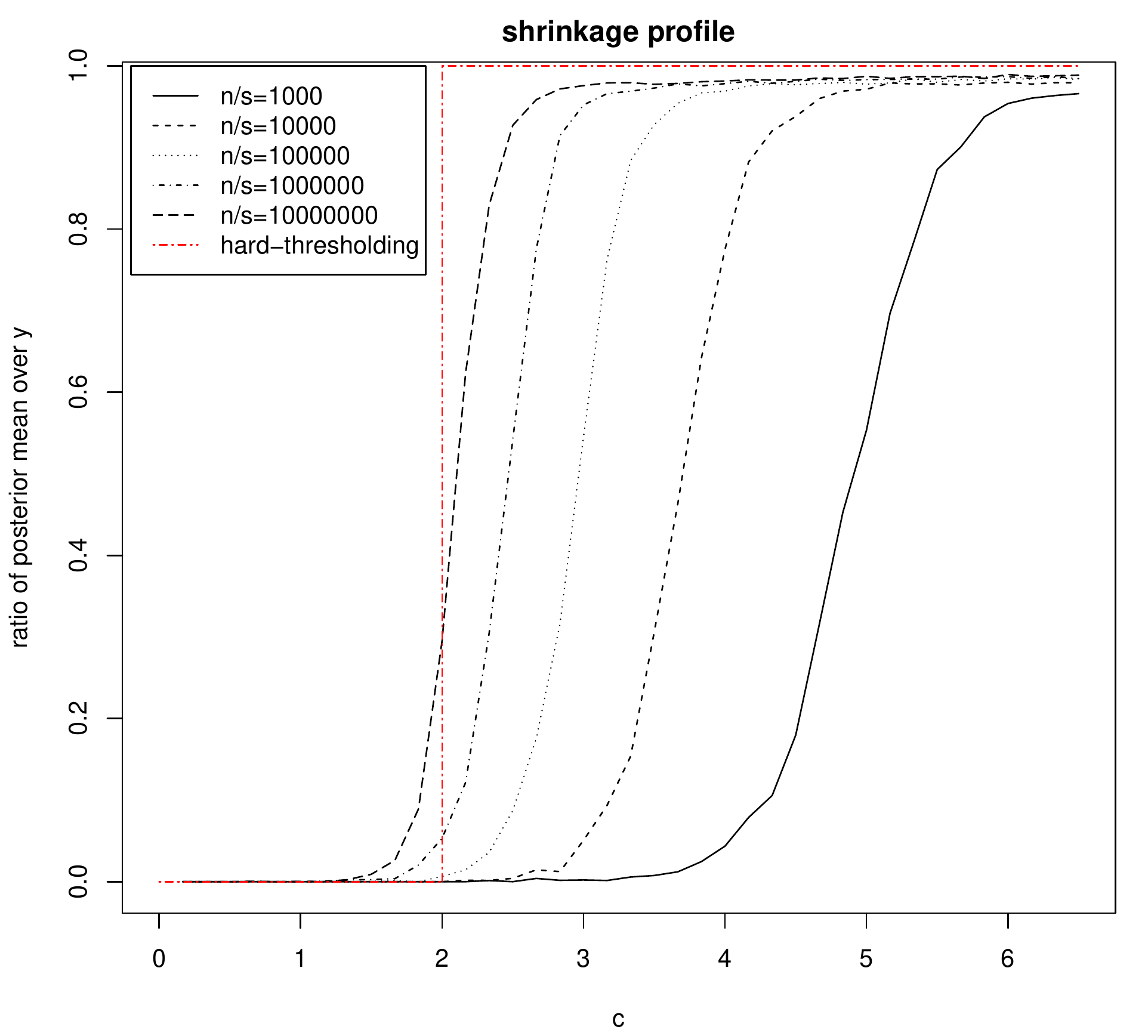}
		\caption{
			Psterior shrinkage profile $E(\theta_i|y_i)/y_i$ under $t$-prior with $\alpha=1.1$ and $\tau = (s/n)^{1.15/0.1}$.
		}\label{sp}
	\end{center}
\end{figure}

In additional, several simulation experiments are presented in the Supplementary Material, where we explore (1) the posterior contraction of shrinkage prior under varying nonzero $\theta$'s scenario, (2) the posterior convergence performances for nonzero $\theta$'s and zero $\theta$'s respectively,
and (3) the uncertainty quantification and Bayesian model selection of shrinkage priors.

\vskip 0.1in
{ \noindent \it Simulation II: Adaptive Bayesian modeling and comparisons}
\vskip 0.1in
In the second simulation, we no longer assume that $s$ is known, and now the global shrinkage $\tau$ is chosen in an adaptive Bayesian manner.
We compare the following two adaptive Bayesian procedures.
The first prior is constructed based on Theorem \ref{adaptive}. We consider a $t$-prior with $\alpha=1.1$, and $\tau$ follows the Beta modeling (\ref{beta})
with $c=(\alpha+0.05)/(\alpha-1)$.
As for the posterior sampling of this adaptive $t$-prior, in addition to (\ref{gibbs}), 
the marginal condition distribution of $\tau_0$ is
\[
\pi(\tau_0|\mbox{rest})\propto \frac{1}{\tau_0^{cn}}\exp\left\{-\frac{1}{\tau_0^{2c}}\sum_i\frac{\theta_i^2}{2\lambda_i^2}\right\}\frac{(1-\tau_0)^{n-1}}{n+1}1(\tau_0\in(0,1)).
\]
Since this conditional posterior has a compact support, it can be sampled via the inverse cumulative-distribution sampling.
Another prior is the horseshoe prior with $\tau$ following a half-Cauchy distribution truncated on $[1/n,1]$ \citep{VanSV2017}.
The simulation settings for data dimension and signal strength are exactly the same as the first
simulation study.
Obviously, one shall expect that the performances of adaptive Bayesian approaches are worse than 
the case that $s$ is known.

Figure \ref{ada} demonstrates a comprehensive comparison between adaptive $t$-prior and adaptive horseshoe prior, in terms
of posterior contraction, posterior mean square $L_2$ error and posterior mean $L_1$ error. 
It shows that the adaptive $t$-prior has a better performance in almost every aspect.
The posterior contraction plots of $t$-prior always have a decreasing trend towards 0 under different 
signal strengths, and its convergence pattern is very similar to Figure
\ref{order}. This implies that the Beta modeling of $\tau$ is a good substitute for the optimal 
choice $\tau=(s/n)^{(\alpha+0.05)/(\alpha-1)}$.
The posterior contraction of the adaptive horseshoe, on the other side, doesn't converge at all. 
The plot pattern is also quite different from the posterior contraction plot in Figure \ref{order},
especially for the case $t=6.2$. This somehow indicates that the truncated half-Cauchy prior doesn't
adapt well to large signals.
For the posterior mean $L_2$ error of the adaptive $t$-prior, when the signal is weak or strong, its
error is well bounded by minimax rate if $n$ is large. When the signal strength is moderate,
its error slightly exceeds the minimax rate. However, there is no trend showing that the
$L_2$ error will increase faster than the minimax rate, thus we believe that if $n$ continues growing and $\alpha$ is closer to 1,
the error of adaptive $t$-prior should be asymptotically bounded by the minimax rate.
But the adaptive horseshoe prior induces a much larger error than the minimax rate, except for the 
weak signal situation. Similarly, in term of the $L_1$ norm, the
adaptive $t$-prior attains the minimax rate, and it 
clearly outperforms the horseshoe prior regardless of the signal strength.

\begin{figure}[htbp]
	\begin{center}
		\includegraphics[width=13cm]{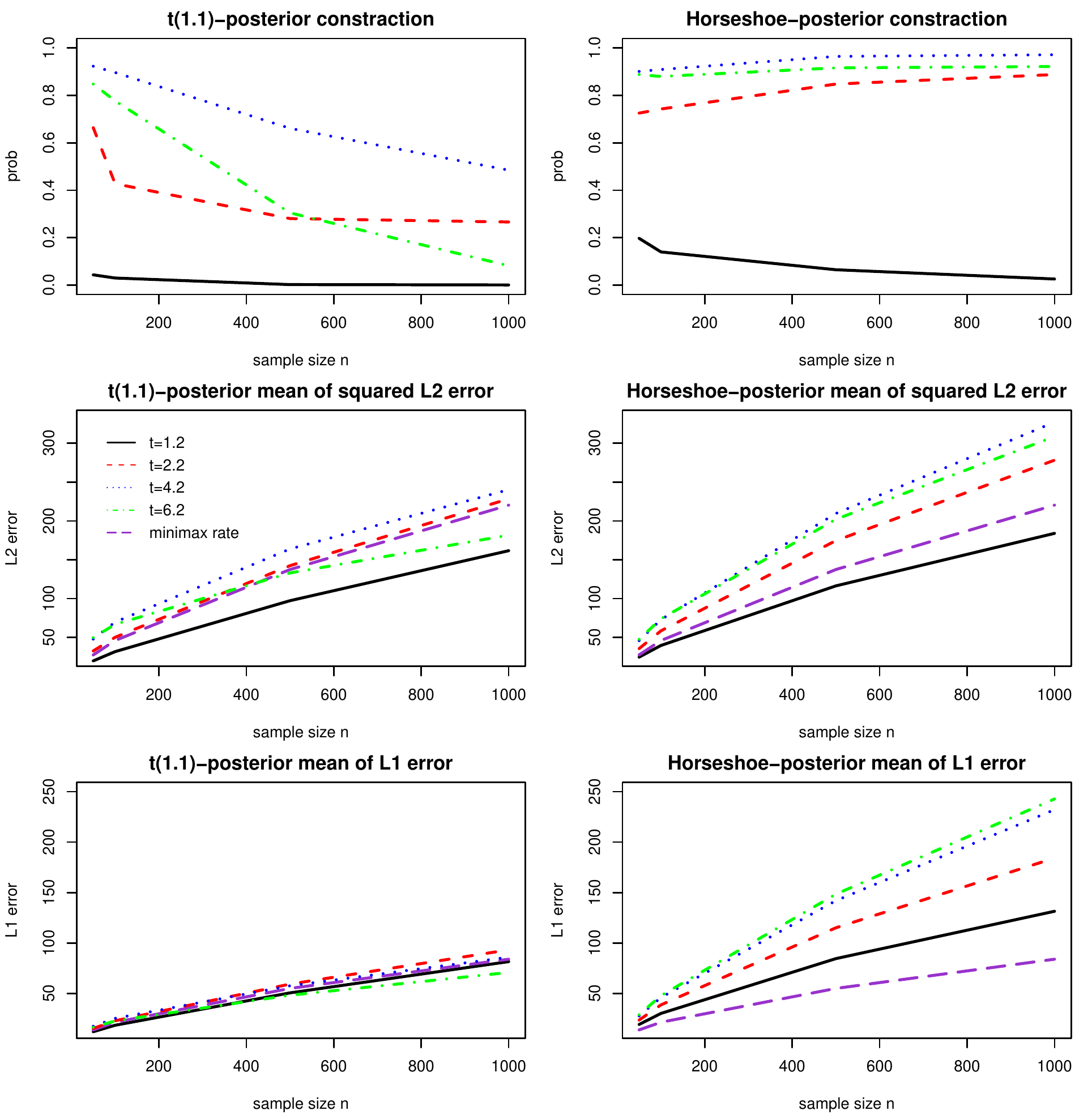}
		\caption{
			Comparison between adaptive $t$-prior and adaptive horseshoe prior.
		}\label{ada}
	\end{center}
\end{figure}

As a conclusion, the presented two simulation studies  demonstrate the necessity
of choosing $\alpha$ to be sufficiently close to 1.
A prior specification as simple as $t$-distribution with a tiny degree of freedom 
ensures supreme Bayesian contraction and estimation.
The proposed adaptive Beta modeling on the global shrinkage $\tau$ leads to a very stable
result and significantly outperforms the adaptive horseshoe prior.

\vskip 0.1in
{ \noindent \it Real data set analysis}
\vskip 0.1in
We consider a popular prostate cancer dataset \citep{efron2008microarrays,singh2002gene} 
from a microarray experiment which consists of expression levels for $n=6033$ genes from 50 normal control subjects and 52 
cancer patients\footnote{The data set is available in the book \citep{efron2012large}.}. 
Two-sample tests are performed to compare the expression level of each gene between control and patient groups.
The corresponding p-values are thereafter converted into $z$-statistics, i.e.,
$z_i=\Phi^{-1}(p_i/2)$ for $i=1,\dots,n$. Hence, it is appropriate to model these $z$-statistics as a normal means model
$z_i=\theta_i+\epsilon_i$, where $\theta_i=0$ if the mean expression levels for the $i$th gene are
the same between control and patient population. We use Bayesian shrinkage to make inference on the
 parameter $\theta$.

We implement the adaptive $t$ prior with $\alpha=1.1$ and $\tau$ following the Beta modeling (\ref{beta}),
and the adaptive horseshoe prior with $\tau$ following truncated half Cauchy prior.
Note that horseshoe prior has already been used to analyze this prostate cancer data
in the literature \citep{BhattacharyaPPD2015,bai2017inverse,bhadra2017horseshoe+},
but all these applications choose $\tau$ to follow the non-truncated half Cauchy prior.
As illustrated by \citep{VanSV2017}, the empirical performance between truncated half Cauchy hyper-prior and 
non-truncated half Cauchy hyper-prior are quite different, hence the horseshoe posterior summary presented 
in this section is not comparable to the results in the literature.

\begin{figure}[htbp]
	\begin{center}
		\includegraphics[width=11cm]{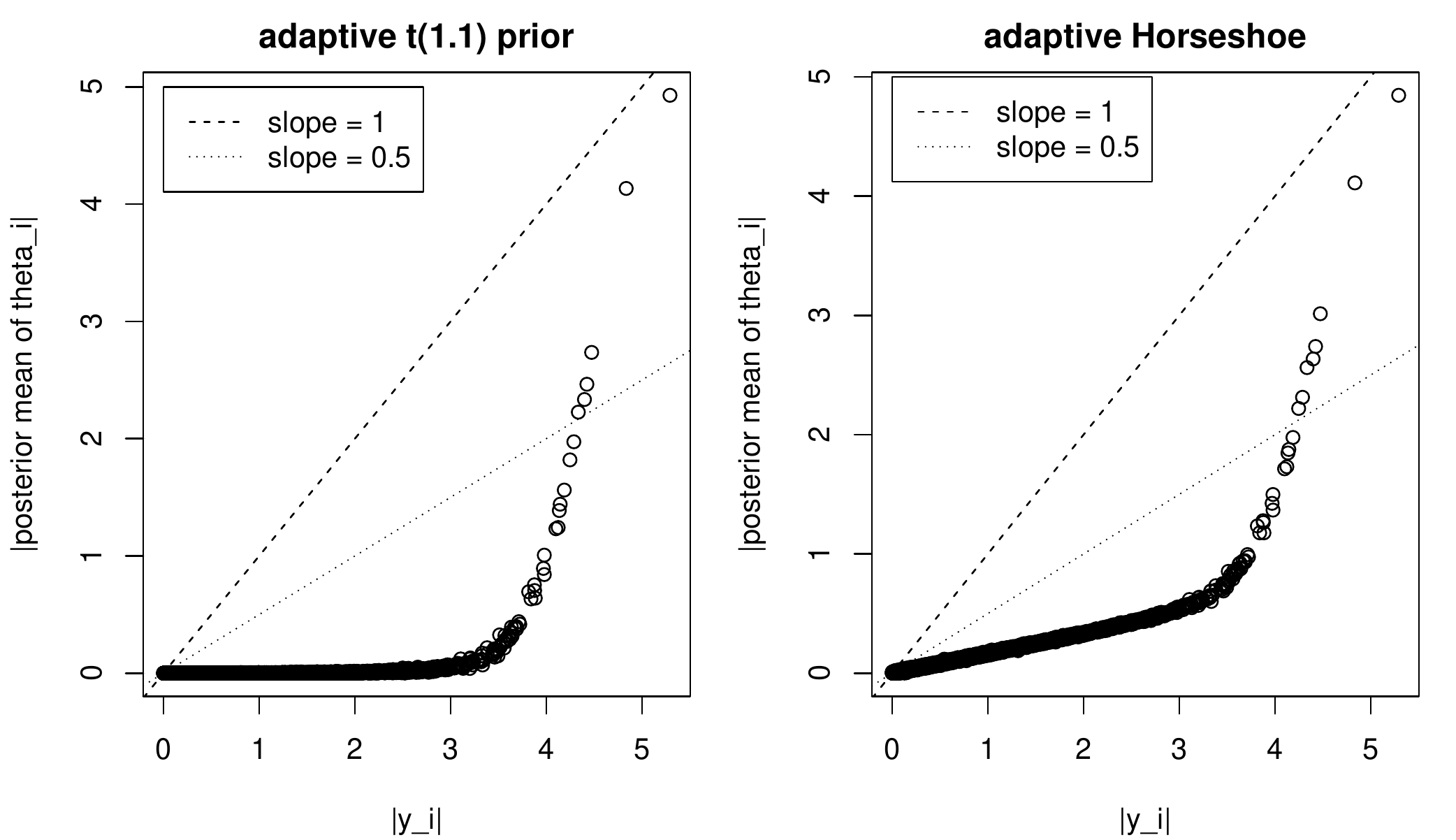}
		\caption{
			Comparison between adaptive $t$-prior and adaptive horseshoe prior.
		}\label{real-comp}
	\end{center}
\end{figure}

In Figure \ref{real-comp}, we plot the posterior means $|E(\theta_i|D_n)|$ against the observations
$|y_i|$. For larger $|y_i|$'s, the posterior means between adaptive $t$-prior and adaptive horseshoe
prior are close. For smaller $|y_i|$'s, the adaptive $t$-prior apparently induces stronger 
shrinkage effect. This observation is consistent to our simulations results that horseshoe prior imposes
insufficient posterior shrinkage for the zero $\theta_i$'s. 
If we perform a variable selection by selecting genes for which  $|E(\theta_i|D_n)|/|y_i|>1/2$, 
then horseshoe selects the top 8 genes and $t$-prior selects the top 6 genes. This is also consistent
to our previous arguments that the $t$-prior with polynomial order close to 1 is less powerful than horseshoe prior.
Note that the posterior of $\pi(\theta_i|D_n)$ has two major modes around $0$ and $y_i$,
hence the selection rule $|E(\theta_i|D_n)|/|y_i|>1/2$ heuristically means that the posterior mass for the 
mode around $y_i$ is greater than half.

\section{Final remarks}\label{end}
In this work, we study the Bayesian inference on high dimensional sparse normal sequence model
with a polynomially decaying prior distribution. Our main result Theorem \ref{main}
reveals the connection between the upper bound of the posterior contraction and the polynomial order $\alpha$.
This provides a sufficient condition to induce sharp posterior minimaxity. That is, choosing a sufficiently tiny $\alpha-1$, the ratio between Bayesian posterior contraction rate and minimax will be 
sufficiently close to 1.
We conjecture necessity holds for Theorem \ref{main} as well, such that
the smaller the $\alpha-1$, the better the Bayesian contraction in terms of the multiplicative constant.
 Empirical studies also show great improvement for the accuracy of Bayesian shrinkage procedure
using a $t$-prior with a tiny degree of freedom. 
Our study considers $\alpha$ to be a fixed, sufficiently small hyperparameter.
Alternatively, one can investigate the choice of letting the polynomial order $\alpha$ decrease to 1 as $n$ increases, i.e.,
$\lim\alpha_n=1$ under proper rate.
Another related question will be: is it a good choice to use an improper prior with exactly $\alpha=1$, e.g.
$\pi_0(\theta)\propto \theta^{-1}$. Our theoretical results break down when $\alpha=1$ (the term $\alpha-1$ appears in the denominator),
and our technical tool which follows the arguments of Le Cam-Birg{\' e} testing theory \citep{Birge1984,Barron1998,LeCam1986} only works
for proper prior specifications.

The primary research interest of this paper is on the $L_2$ and $L_1$ posterior contraction rates. 
Another important research objective is sparsity recovery, i.e., to identify the set $\{j: \theta_j^*\neq 0\}$.
Given a continuous posterior distribution induced by a shrinkage prior, one easy way to perform model selection is 
to do a threshold truncation, that is, a variable is selected if its posterior summary such as posterior mean is greater than some thresholding value. This simply approach has been widely used in the literature, however, it usually 
leads to over-selection with the number of false positives being of order of $O(s)$ \citep[e.g., Theorem 3.4 of][]{BhattacharyaPPD2015}.
Another different model selection approach is to select $\theta_i$'s whose marginal credible intervals exclude 0, and the consistency of this Bayesian selection method is investigated by \citep{VanSV2017_2} for the horseshoe prior.

This work focus on the normal sequence model, thus it would be of substantial interest to conduct similar 
investigation for general regression model. Our results heavily rely on the independence among $y_i$'s,
and it is not trivial to extend these results to regression model with correlated design matrix.
\citet{SongL2017} studies the posterior asymptotics for general linear regression model,
including order-$(s\log p/n)^{1/2}$ $L_2$ contraction and model selection consistency, when a polynomially decaying prior 
is used.
We believe that the choice of polynomial order also plays a role for the multiplicative constant of the 
posterior contraction rate under regression model, and we conjecture that the optimal choice of $\alpha$ will depend on the eigen structure of the design matrix.
If the design matrix $X$ is nearly orthogonal, e.g., all entries of $X$ follow independent Gaussian distribution,
we conjecture that the same results as Theorem \ref{main} will still hold, and one need to choose $\alpha\approx 1$
in order to obtain optimal Bayesian contraction.

\appendix
\section{Technical proofs}
\subsection{Proof of Theorem \ref{main}}\label{proof1}
The proof consists of two parts. Since the posteriors of $\theta_i$'s are independent,
in Part I, we study the posterior contraction for the nonzero $\theta_i$'s and in Part II, we 
study the posterior contraction for the zero $\theta_i$'s. 
First, let us state some useful lemmas.

\begin{lemma}[Lemma 1 of \citep{LaurentM2000}]\label{chi}
	Let $\chi^2_{d}(\kappa)$ be a chi-square random variable with degree of freedom $d$ and noncentral parameter $\kappa$, then we have the following concentration inequality
	\[Pr( \chi^2_{d}(\kappa)<d+\kappa-\{(4d+8\kappa)x\}^{1/2})\leq \exp(-x),\]
	for any $x>0$.
\end{lemma}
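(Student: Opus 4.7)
The plan is to establish the bound via a standard Chernoff (Cram\'er--Chernoff) argument applied to the lower tail of the noncentral $\chi^2$ distribution. First I would invoke the representation $\chi^2_d(\kappa)\stackrel{d}{=}\sum_{i=1}^d(Z_i+a_i)^2$, where $Z_1,\dots,Z_d$ are i.i.d.\ standard normal and $\sum_{i=1}^d a_i^2=\kappa$. From the single-variable identity $E[\exp\{s(Z+a)^2\}]=(1-2s)^{-1/2}\exp\{sa^2/(1-2s)\}$, valid for $s<1/2$, independence yields
\[
E\bigl[\exp\{s\chi^2_d(\kappa)\}\bigr]=(1-2s)^{-d/2}\exp\bigl\{s\kappa/(1-2s)\bigr\}.
\]

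For the lower tail I would apply the exponential Markov inequality with a negative exponent. For any $t>0$ and any $u>0$, restricting without loss of generality to the nontrivial case $d+\kappa-u\ge 0$ (otherwise the probability is zero),
\[
\Pr\bigl(\chi^2_d(\kappa)<d+\kappa-u\bigr)\le \exp\bigl\{t(d+\kappa-u)\bigr\}\,E\bigl[\exp\{-t\chi^2_d(\kappa)\}\bigr].
\]
Plugging in the MGF evaluated at $s=-t$, the log of the right-hand side equals $t(d+\kappa-u)-\tfrac{d}{2}\log(1+2t)-t\kappa/(1+2t)$. Using the elementary one-sided bounds $\log(1+2t)\ge 2t-2t^2$ and $1/(1+2t)\ge 1-2t$ for $t\ge 0$ (both pushing in the favorable direction once the signs from the $-\tfrac{d}{2}$ and $-t\kappa$ prefactors are tracked), this exponent is bounded above by $-tu+t^2(d+2\kappa)$. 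Optimizing the quadratic in $t$ at $t^\ast=u/[2(d+2\kappa)]$ yields
\[
\Pr\bigl(\chi^2_d(\kappa)<d+\kappa-u\bigr)\le \exp\bigl\{-u^2/[4(d+2\kappa)]\bigr\},
\]
and the change of variables $x=u^2/[4(d+2\kappa)]$, equivalently $u=\sqrt{(4d+8\kappa)x}$, reproduces the stated inequality.

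The only real obstacle is selecting one-sided Taylor inequalities of just the right strength so that the logarithmic and rational terms in the log-MGF collapse into a clean quadratic in $t$ whose optimum matches the constants $4d+8\kappa$ appearing in the conclusion; any weaker inequality will inflate these constants. A slicker alternative would be to invoke the Poisson mixture representation of $\chi^2_d(\kappa)$ as a Poisson$(\kappa/2)$ mixture of central $\chi^2$ variables with an even degree-of-freedom shift, and then quote the central Laurent--Massart bound. The direct Chernoff computation above is, however, self-contained, handles the noncentrality term uniformly in $\kappa$, and delivers precisely the explicit constants stated in the lemma.
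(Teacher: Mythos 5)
Your proof is correct. The paper merely cites this as Lemma~1 of Laurent and Massart (2000) and gives no proof of its own, so there is nothing internal to compare against; your derivation supplies a clean, self-contained argument. The route you take---MGF of the noncentral $\chi^2$, exponential Markov inequality with a negative tilt $s=-t$, the two one-sided Taylor bounds $\log(1+2t)\ge 2t-2t^2$ and $1/(1+2t)\ge 1-2t$, and optimization of the resulting quadratic $-tu+t^2(d+2\kappa)$ at $t^\ast=u/[2(d+2\kappa)]$---is precisely the standard Cram\'er--Chernoff derivation, and it reproduces the constants $4d+8\kappa$ exactly. I checked the two elementary inequalities: $f(x)=\log(1+x)-x+x^2/2$ satisfies $f(0)=0$, $f'(x)=x^2/(1+x)\ge 0$, giving the first; the second is immediate from $1-(1-x)(1+x)=x^2\ge 0$. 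Both push the log-MGF bound in the correct (upward) direction under the $-d/2$ and $-t\kappa$ prefactors, so the step is sound, and the final change of variables $u=\{(4d+8\kappa)x\}^{1/2}$ recovers the stated inequality verbatim. The Poisson-mixture alternative you mention is also a valid way to reduce to the central Laurent--Massart bound, but the direct computation is cleaner here since it handles the noncentrality uniformly without conditioning.
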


\begin{lemma}[Theorem 1 of \citep{ZubkovS2013}]\label{lemmad}
        Let $X$ be a Binomial random variable $X\sim \mbox{B}(n,v)$. For any $1<k<n-1$
	\[ 
	Pr(X\geq k+1)\leq 1- \Phi(\mbox{sign}(k-nv)\{2nH(v, k/n)\}^{1/2}),
	\]
	where $\Phi$ is the cumulative distribution function of standard Gaussian distribution and
	$H(v, k/n)= (k/n)\log(k/nv)+(1-k/n)\log[(1-k/n)/(1-v)]$.
\end{lemma}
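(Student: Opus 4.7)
The plan is to prove this sharp Binomial tail bound (due to Zubkov and Serov) by refining the standard Chernoff--Hoeffding rate $\exp(-nH(v,k/n))$ to the exact Gaussian-tail factor $1-\Phi(\sqrt{2nH(v,k/n)})$; the $\mathrm{sign}(k-nv)$ factor then emerges automatically, producing a trivially satisfied lower bound on $\Phi$ in the lower-tail case $k<nv$.

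First I would reduce to the upper tail $k\ge nv$. When $k<nv$, the symmetry $H(v,k/n)=H(1-v,(n-k)/n)$ together with $n-X\sim\mathrm{B}(n,1-v)$ lets one deduce the claim from the upper-tail version applied to $n-X$ at the dual level $n-k-1$, the swapped sign being exactly what flips the argument of $\Phi$. Assuming $k\ge nv$, I would invoke the classical identity tying the Binomial tail to the regularized incomplete Beta function,
$$\Pr(X\ge k+1)=\frac{n!}{k!(n-k-1)!}\int_0^v t^k(1-t)^{n-k-1}\,dt,$$
which places the problem on analytic ground. The Beta integrand is maximized at $t=k/n$, so the region of integration $[0,v]$ sits strictly to the left of its mode, and a Laplace-type analysis is in order.

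Next I would perform the change of variables $u=u(t)$ defined implicitly by $u^2/2=nH(t,k/n)$ with $\mathrm{sign}(u)=\mathrm{sign}(t-k/n)$, so that the exponential part of the Beta density becomes exactly $e^{-u^2/2}$ and the endpoint $t=v$ maps to $u(v)=-\sqrt{2nH(v,k/n)}$. The integral rearranges into $\int_{-\infty}^{u(v)}A(u)\,e^{-u^2/2}\,du$, where $A(u)$ bundles the Jacobian $dt/du$ with the factorial prefactor. The core analytic step is then the pointwise inequality $A(u)\le(2\pi)^{-1/2}$, since given this the integral is dominated by $\int_{-\infty}^{-\sqrt{2nH}}\phi(u)\,du=1-\Phi(\sqrt{2nH(v,k/n)})$ and we are done. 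The inequality $A(u)\le(2\pi)^{-1/2}$ follows, after applying Stirling's bounds to the prefactor, from a one-variable log-density comparison: the Beta and Gaussian densities are both log-concave and, by construction of $u$, share value and slope at the mode, so their log-ratio is a concave function vanishing at $0$, which forces the desired sign.

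The main obstacle is extracting the sharp constant $(2\pi)^{-1/2}$ rather than some larger $C$. A rough Stirling-plus-Laplace argument immediately yields $\Pr(X\ge k+1)\le C(nH)^{-1/2}\exp(-nH(v,k/n))$, but recovering the exact Gaussian prefactor is what distinguishes this inequality from Bernstein- or Hoeffding-type bounds and is where standard derivations lose a multiplicative factor. An alternative route, which avoids the Jacobian computation altogether, treats $v$ as the variable and compares $f(v):=\Pr_{\mathrm{B}(n,v)}(X\ge k+1)$ with the target $g(v):=1-\Phi(\mathrm{sign}(k-nv)\sqrt{2nH(v,k/n)})$ directly. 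Both are continuous and increasing with matching boundary values $f(0)=g(0)=0$, $f(1)=g(1)=1$, and $f'(v)=\frac{n!}{k!(n-k-1)!}v^k(1-v)^{n-k-1}$ is the Beta density while $g'(v)$ can be written explicitly through the chain rule; the proof then reduces to verifying $f'\le g'$ on $(0,k/n)$ and $f'\ge g'$ on $(k/n,1)$, which is a derivative-level one-variable inequality amenable to monotonicity arguments. Either route, the pointwise Jacobian bound or the differential comparison, must preserve the precise constant $2$ in the exponent throughout; that is the delicate point that separates a complete proof of the stated lemma from the weaker classical bounds.
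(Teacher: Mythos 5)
The paper does not prove this lemma; it is quoted verbatim as Theorem~1 of Zubkov and Serov (2013) and used as a black box in the proof of Theorem~\ref{main} (to bound the prior probability of the event $B_n$). So there is no ``paper's own proof'' to compare against, and the question is simply whether your sketch would constitute a valid proof of the cited result.

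As written, it has a concrete gap at the crucial step. Your Laplace argument is built on the claim that, after the change of variables $u^2/2=nH(t,k/n)$, the transformed Beta density and the standard Gaussian ``share value and slope at the mode,'' from which log-concavity of the ratio would give $A(u)\leq A(0)$. But the two objects are \emph{not} centered at the same point. The reference point of the substitution is $t=k/n$, the minimizer of $H(t,k/n)$; the Beta density $\propto t^k(1-t)^{n-k-1}$ is a $\mathrm{Beta}(k+1,\,n-k)$ density whose mode is at $t=k/(n-1)$. Indeed the log-derivative of $t^k(1-t)^{n-k-1}$ at $t=k/n$ equals $n/(n-k)\neq 0$, so the slopes do \emph{not} match at $u=0$, and the ``concave log-ratio vanishing at $0$ with zero slope'' argument fails outright. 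This slope mismatch is not a nuisance but the heart of the matter: it is precisely what makes the bound one-sided (an upper bound on the upper tail) and what makes the sharp constant delicate. Your observation that $A(0)=(2\pi)^{-1/2}e^{r_n-r_k-r_{n-k}}<(2\pi)^{-1/2}$ via Robbins' form of Stirling is fine, but without a correct argument that $A(u)\leq A(0)$ for all $u$ on the relevant side, the proof does not close. Your second route (the derivative-level comparison of $f'$ and $g'$ in the variable $v$) is closer in spirit to what Zubkov and Serov actually do, but as stated it is only a plan (``amenable to monotonicity arguments'') and faces the same difficulty: one must control the same slope mismatch, now between the $\mathrm{Beta}(k+1,n-k)$ density and the chain-rule derivative of $\Phi(\pm\sqrt{2nH})$, and show the sign change occurs exactly at $v=k/n$. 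If you pursue this lemma, the derivative-comparison route is the right one, but it requires the explicit one-variable inequality to be proved rather than asserted.
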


The next lemma is a refined result of Lemma 6 in \citep{Barron1998}:
\begin{lemma}\label{lemmab}
	Let $f^*$ be the true probability density of data generation, $f_{\theta}$ be the likelihood function
	with parameter $\theta\in\Theta$, and $E^*$, $E_\theta$ denote the corresponding expectation
	respectively. Let $B_n$ and $C_n$ be two subsets of the parameter space $\Theta$, and $\phi_n$ be some
	testing function satisfying 
	$\phi_n(D_n)\in[0,1]$ for any realization $D_n$ of the data generation. If $\pi(B_n)\leq b_n$, $E^*\phi(D_n)\leq b_n'$,
	$\sup_{\theta\in C_n}E_{\theta}(1-\phi(D_n))\leq c_n$, and 
	\[P^*\left\{\frac{m(D_n)}{f^*(D_n)}\geq a_n \right\}\geq 1-a_n',\]
	where $m(D_n)=\int_{\Theta} \pi(\theta)f_{\theta}(D_n)d\theta$ is the margin density of $D_n$, then,
	\[
	\begin{split}
	&E^*\left(\pi(C_n\cup B_n)|D_n)\right)\leq \frac{b_n+c_n}{a_n}+a_n'+b_n'.
	\end{split}
	\]
\end{lemma}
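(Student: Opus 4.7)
The plan is a standard Bayes-rule-plus-testing decomposition in the spirit of Birg\'e--Le Cam: use the test $\phi_n$ to carve off the data-side type-I error, use the event $A = \{m(D_n)/f^*(D_n)\geq a_n\}$ to control the denominator in the posterior, and integrate the type-II error of $\phi_n$ against the prior to bound the numerator. This extends Lemma 6 of Barron et al.\ (1998) by allowing a separate set $B_n$ to absorb a portion of parameter space with small prior mass, so that applications (such as the proof of Theorem \ref{main}) need not construct tests against that part.

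First, since $\phi_n\in[0,1]$ and the posterior is at most one,
\[
E^*\pi(C_n\cup B_n\mid D_n) \;\le\; E^*\phi_n + E^*\bigl[(1-\phi_n)\pi(C_n\cup B_n\mid D_n)\bigr] \;\le\; b_n' + E^*\bigl[(1-\phi_n)\pi(C_n\cup B_n\mid D_n)\bigr].
\]
Split the second summand according to whether $A$ holds: the $A^c$-part is trivially at most $P^*(A^c) \le a_n'$. On $A$, use $m(D_n)\ge a_n f^*(D_n)$ together with Fubini and the change-of-measure identity $E^*[(1-\phi_n)f_\theta(D_n)/f^*(D_n)] = E_\theta(1-\phi_n)$ to get
\[
E^*\bigl[(1-\phi_n)\pi(C_n\cup B_n\mid D_n)\mathbf{1}_A\bigr] \;\le\; a_n^{-1}\int_{C_n\cup B_n}\pi(\theta)\,E_\theta(1-\phi_n)\,d\theta.
\]
Decompose $C_n\cup B_n = C_n \cup (B_n\setminus C_n)$; on $C_n$ apply the uniform type-II bound $E_\theta(1-\phi_n)\le c_n$, and on $B_n\setminus C_n\subseteq B_n$ use the trivial bound $E_\theta(1-\phi_n)\le 1$. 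The integral is thus at most $c_n\pi(C_n) + \pi(B_n) \le c_n + b_n$, giving a contribution of $(b_n+c_n)/a_n$.

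Collecting the three contributions $b_n'$, $a_n'$, and $(b_n+c_n)/a_n$ yields the claimed bound. There is no substantive obstacle: once the two splits (by $\phi_n$ and by $A$) are in place, the rest is careful bookkeeping. The only care needed is verifying Fubini (immediate here, since all integrands are non-negative and the integrals are finite) and applying the change-of-measure identity that turns the $f_\theta/f^*$ factor under $E^*$ into $E_\theta$.
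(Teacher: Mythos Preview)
Your proposal is correct and follows essentially the same approach as the paper's proof: both split by the test $\phi_n$ and by the event $\{m(D_n)/f^*(D_n)\ge a_n\}$, then use Fubini and the change of measure $E^*[(1-\phi_n)f_\theta/f^*]=E_\theta(1-\phi_n)$ to bound the numerator by $c_n+b_n$. The only cosmetic difference is that you partition $C_n\cup B_n$ into $C_n$ and $B_n\setminus C_n$, whereas the paper simply upper-bounds by $\int_{C_n}+\int_{B_n}$; both give the same result.
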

\begin{proof}
	Define $\Omega_n$ be the event of $(m(D_n))/(f^*(D_n))\geq a_n$,
	and $m(D_n, C_n\cup B_n) = \int_{C_n\cup B_n} \pi(\theta)f_{\theta}(D_n)d\theta$. Then
	\[
	\begin{split}
	&E^*\pi(C_n\cup B_n)|D_n) = E^*\pi(C_n\cup B_n)|D_n)(1-\phi(D_n))1_{\Omega_n}\\
	+&E^*\pi(C_n\cup B_n)|D_n)(1-\phi(D_n))(1-1_{\Omega_n})+ E^*\pi(C_n\cup B_n)|D_n)\phi(D_n)\\
	\leq& E^*\pi(C_n\cup B_n)|D_n)(1-\phi(D_n))1_{\Omega_n}+E^*(1-1_{\Omega_n})+ E^*\phi(D_n)\\
	\leq& E^*\pi(C_n\cup B_n)|D_n)(1-\phi(D_n))1_{\Omega_n}+b_n'+a_n'\\
	\leq& E^*\{m(D_n, C_n\cup B_n)/a_nf^*(D_n)\}(1-\phi(D_n))+b_n'+a_n'.
	\end{split}
	\]
	
	By Fubini theorem,
	\[\begin{split}
	&E^*(1-\phi(D_n))m(D_n, C_n\cup B_n)/f^*(D_n) \\
	=& \int_{C_n\cup B_n} \int_{\mX}[1-\phi(D_n)]f_\theta(D_n) dD_n\pi(\theta)d\theta\\
	\leq&\int_{C_n} E_{\theta}(1-\phi(D_n))\pi(\theta)d\theta+\int_{ B_n}  \int_{\mX}f_\theta(D_n) dD_n\pi(\theta)d\theta\leq b_n+c_n.
	\end{split}\]
	
	Combining the above inequalities leads to the conclusion.
\end{proof}
{\noindent Part I: posterior contraction rate of nonzero $\theta_i$'s}
\vskip 0.1in
It is equivalent to consider the situation that 
$\tilde y=\vartheta_1+\epsilon$, where $\tilde y,\vartheta_1\in \BR^s$, $\epsilon\sim N(0,I_s)$.
The parameter $\vartheta_1$ is subject to prior $\prod_{j=1}^s\pi(\vartheta_{1,j})$. The parameter 
$\vartheta_1$ hence correspeonds to the subvector of $\theta$: $\vartheta_1=(\theta_j)_{j\in\{\theta_j^*\neq 0\}}$, and its true value is $\vartheta_1^*=(\theta_j^*)_{j\in\{\theta_j^*\neq 0\}}$.
We want to show that 
\begin{equation}\label{part1}
E^*\pi(\|\vartheta_1-\vartheta_1^*\|\geq \{(2+\omega)s\log(n/s)\}^{1/2}|\tilde y)\leq \exp\{-c_0s\log(n/s)\},
\end{equation}
for 
some positive constant $c_0$.

Let's consider the testing function $\phi(\tilde y)=1(\|\tilde y-\vartheta_1^*\|\geq \{\delta s\log(n/s)\}^{1/2})$
where $\delta$ is a positive but tiny constant.
This testing function satisfies
\begin{equation}\label{tfun1}
E_{\vartheta_1^*}\phi(\tilde y)= Pr(\chi_s^2\geq \delta s\log(n/s))\leq \exp\left\{-\frac{\delta s\log (n/s)}{2+\delta_0}\right\}
\end{equation}
where $E_{\vartheta_1}$ denotes the expectation over $\tilde y$ with respect to true parameter being $\vartheta_1$, 
and the last inequality holds for any fixed $\delta_0>0$ when $n$ is sufficiently large due to
Lemma \ref{chi}. And for any $\vartheta_1\in C_n=\{\vartheta_1\in\BR^s:\|\vartheta_1-\vartheta_1^*\|\geq \{(2+\omega)s\log(n/s)\}^{1/2}\}$,
\begin{equation}\label{tfun2}
\begin{split}
&E_{\vartheta_1}[1-\phi(\tilde y)]= Pr(\|\tilde y-\vartheta_1^*\|\leq \{\delta s\log(n/s)\}^{1/2}|\vartheta_1)\\
\leq& Pr(\|\tilde y-\vartheta_1\|\geq \|\vartheta_1-\vartheta^*_1\|-\{\delta s\log(n/s)\}^{1/2}|\vartheta_1)\\
=& Pr(\chi_s^2\geq[\|\vartheta_1-\vartheta^*_1\|-\{\delta s\log(n/s)\}^{1/2}]^2) \\
\leq&\exp\left\{-\frac{[\|\vartheta_1-\vartheta_1^*\|-\{\delta s\log(n/s)\}^{1/2}]^2}{2+\delta_0}\right\}\leq \exp\left\{-\frac{\|\vartheta_1-\vartheta_1^*\|^2}{2+2\delta_0}\right\},
\end{split}
\end{equation}
where the last inequality holds as $\delta$ is sufficiently small.
Denote $\Delta\vartheta_1=\vartheta_1-\vartheta_1^*$.
With probability at least $1-\exp\{-cs\log(n/s)\}$ for some positive $c$, $\|\epsilon\|^2\leq {\eta s\log(n/s)/2}$ and
\begin{equation}\label{tfun3}
\begin{split}
&\frac{m(\tilde y)}{f^*(\tilde y)}=\int \exp\left(-\frac{-\|\epsilon\|^2+\|\Delta\vartheta_1+\epsilon\|^2}{2}\right)\pi(\vartheta_1)d\vartheta_1\\
\geq&\exp\left\{-\frac{\eta s\log(n/s)}{2}\right\}\pi(\|\Delta\vartheta_1\|\leq\{\eta s\log(n/s)/2\}^{1/2})
\end{split}
\end{equation}
for any fixed small constant $\eta>0$, where $m(\tilde y)=\int f(\tilde y;\vartheta_1)\pi(d\vartheta_1)$ is the marginal likelihood of data $\tilde y$.

Therefore, let $\Omega_n$ be the event that (\ref{tfun3}) holds, by (\ref{tfun1})-(\ref{tfun3}), we have
\[
\begin{split}
&E^*\pi(C_n|\tilde y)\\
=& E^*\pi(C_n|\tilde y)(1-\phi(\tilde y))1_{\Omega_n}+E^*\pi(C_n|\tilde y)(1-\phi(\tilde y))(1-1_{\Omega_n})+ E^*\pi(C_n|\tilde y)\phi(\tilde y)\\
\leq& E^*\pi(C_n|\tilde y)(1-\phi(\tilde y))1_{\Omega_n}+E^*(1-1_{\Omega_n})+ E^*\phi(\tilde y)\\
\leq& E^*\pi(C_n|\tilde y)(1-\phi(\tilde y))1_{\Omega_n}+\exp\{-cs\log(n/s)\}+\exp\{-\delta s\log (n/s)/(2+\delta_0)\}\\
\leq& \frac{E^*\int_{C_n}f(\tilde y;\vartheta_1)(1-\phi(\tilde y))/f^*(\tilde y)\pi(d\vartheta_1)}{\exp\left\{-\frac{\eta s\log(n/s)}{2}\right\}\pi(\|\Delta\vartheta_1\|\leq\{\eta s\log(n/s)/2\}^{1/2})}\\
&+\exp\{-cs\log(n/s)\} +\exp\{-\delta s\log (n/s)/(2+\delta_0)\}.
\end{split}
\]
And the first term in the above equation satisfies
\[
\begin{split}
&\frac{E^*\int_{C_n}f(\tilde y;\vartheta_1)(1-\phi(\tilde y))/f^*(\tilde y)\pi(d\vartheta_1)}{\exp\left\{-\frac{\eta s\log(n/s)}{2}\right\}\pi(\|\Delta\vartheta_1\|\leq\{\eta s\log(n/s)/2\}^{1/2})}\\
=&\frac{\int\int_{C_n}f(\tilde y;\vartheta_1)(1-\phi(\tilde y))\pi(d\vartheta_1)d\tilde y}{\exp\left\{-\frac{\eta s\log(n/s)}{2}\right\}\pi(\|\Delta\vartheta_1\|\leq\{\eta s\log(n/s)/2\}^{1/2})}\\
=&\frac{\int_{C_n}E_{\vartheta_1}(1-\phi(\tilde y))\pi(d\vartheta_1)}{\exp\left\{-\frac{\eta s\log(n/s)}{2}\right\}\pi(\|\Delta\vartheta_1\|\leq\{\eta s\log(n/s)/2\}^{1/2})}\\
\leq&\frac{\int_{C_n}\exp\{-\|\Delta\vartheta_1\|^2/(2+2\delta_0)\} \pi(d\vartheta_1)}{\pi(\|\Delta\vartheta_1\|\leq\{\eta s\log(n/s)/2\}^{1/2})}\exp\left\{\frac{\eta s\log(n/s)}{2}\right\}.
\end{split}
\]
Let us now study the quantity 
$\int_{C_n}\exp\{-\|\Delta\vartheta_1\|^2/(2+2\delta_0)\} \pi(d\vartheta_1)\big/\pi(\|\Delta\vartheta_1\|\leq\{\eta s\log(n/s)/2\}^{1/2})$.
Let $T_1=\{j;|\vartheta_{1,j}^*|\geq 1\}$ and $T_2=\{j;|\vartheta_{1,j}^*|< 1\}$, $T_3$ be the generic notation for a subset of $T_1$, $T_4=T_1\backslash T_3$
and $t_i=|T_i|$. 
$\vartheta_{1,T_i}$ denotes the subvector of $\vartheta_1$ corresponding to $T_i$. Decompose $C_n =\cup_{T_3\subset T_1}C_n^{T_3}:= \cup_{T_3\subset T_1}(C_n\cap\{\vartheta_1: \{j: |\vartheta_{1,j}|<1\}=T_3\})$.
Then, we have
\begin{equation}\label{bound0}
\begin{split}
&\frac{\int_{C_n}\exp\{-\|\Delta\vartheta_1\|^2/(2+2\delta_0)\} \pi(d\vartheta_1)}{\pi(\|\Delta\vartheta_1\|\leq\{\eta s\log(n/s)/2\}^{1/2})}\\
\leq&\frac{\int_{C_n}\exp\{-\|\Delta\vartheta_1\|^2/(2+2\delta_0)\} \pi(d\vartheta_1)}{\pi(\|\Delta\vartheta_{1,j}\|\leq\{\eta \log(n/s)/2\}^{1/2} \mbox{ for all }j=1,\dots, s)}\\
\leq &
\frac{\int_{C_n}\exp\{-\|\Delta\vartheta_1\|^2/(2+2\delta_0)\}\prod_{j\in T_1}\pi(\vartheta_{1,j})\prod_{j\in T_2}\pi(\vartheta_{1,j}) d\vartheta_1}{\prod_{j\in T_1}\pi(\vartheta_{1,j}^*)\prod_{j\in T_2}\pi(\vartheta_{1,j}\in[-1,1])}\\
=& \sum_{T_3\subset T_1}\frac{ \int_{C_n^{T_3}} \exp\{-\frac{\|\Delta\vartheta_{1}\|}{2+2\delta_0}\}\prod_{j\in T_4}\frac{\pi(\vartheta_{1,j})}{\pi(\vartheta_{1,j}^*)}\prod_{j\in T_3}\frac{1}{\pi(\vartheta_{1,j}^*)}\prod_{j\in T_2\cup T_3}\pi(\vartheta_{1,j}) d\vartheta_1}{\prod_{j\in T_2}\pi(\vartheta_{1,j}\in[-1,1]) },
\end{split}
\end{equation}
where the second inequality holds asymptotically, because $\log(n/s)$ is sufficiently large and $\pi(|\cdot|)$ is a decreasing function.

By C.3, if $|\vartheta_{1,j}|$ and $|\vartheta_{1,j}^*|$ are both larger than 1, then
$|\log(\pi(\vartheta_{1,j})/\pi(\vartheta_{1,j}^*))|\leq |\log(C_2/C_1)|+\alpha |\log|\vartheta_{1,j}|-\log|\vartheta_{1,j}^*||
\leq |\log(C_2/C_1)|+\alpha |\vartheta_{1,j}-\vartheta_{1,j}^*|$. 
If $|\vartheta_{1,j}^*|\geq1$, then we have that $\log[1/\pi(\vartheta_{1,j}^*)]\leq \alpha\log|\vartheta_{1,j}^*|+(\alpha-1)\log(1/\tau)-\log C_1
\leq \alpha|\vartheta_{1,j}^*|+(\alpha-1)\log(1/\tau)-\log C_1$.
Using these facts,  for any $T_3\subset T_1$,
\begin{equation}\label{boundrev1}
\begin{split}
 &\frac{\int_{C_n^{T_3}} \exp\{-\frac{\|\Delta\vartheta_{1}\|}{2+2\delta_0}\}\prod_{j\in T_4}\frac{\pi(\vartheta_{1,j})}{\pi(\vartheta_{1,j}^*)}\prod_{j\in T_3}\frac{1}{\pi(\vartheta_{1,j}^*)}\prod_{j\in T_2\cup T_3}\pi(\vartheta_{1,j}) d\vartheta_1}{\prod_{j\in T_2}\pi(\vartheta_{1,j}\in[-1,1])}\\
\leq &\frac{ \int_{C_n^{T_3}} \exp\{-\frac{\|\Delta\vartheta_{1}\|^2}{2+2\delta_0}+\alpha\|\Delta\vartheta_{1,T_4}\|_1+t_4\log\frac{C_2}{C_1}+\alpha\|\vartheta_{1,T_3}^*\|_1\}\prod_{j\in T_3\cup T_2}\pi(\vartheta_{1,j}) d\vartheta_1}
{C_1^{t_3}[\tau^{\alpha-1}]^{t_3}[1-2C_2(1/\tau)^{-(\alpha-1)}]^{t_2}}.
\end{split}
\end{equation}
Since for any  $j\in T_1$,
$\Delta\vartheta_{1,j}^2-(2+2\delta_0)\alpha|\Delta\vartheta_{1,j}|=(|\Delta\vartheta_{1,j}|-(1+\delta_0)\alpha)^2-
(1+\delta_0)^2\alpha^2$, we have that 
\begin{equation}\label{boundrev2}
\begin{split}
 &\frac{ \exp\{-\frac{\|\Delta\vartheta_{1}\|^2}{2+2\delta_0}+\alpha\|\Delta\vartheta_{1,T_4}\|_1+t_4\log\frac{C_2}{C_1}+\alpha\|\vartheta_{1,T_3}^*\|_1\}}
{C_1^{t_3}[\tau^{\alpha-1}]^{t_3}[1-2C_2(1/\tau)^{-(\alpha-1)}]^{t_2}}\\
\leq &\frac{  \exp\{-\frac{[\|\Delta\vartheta_1\|-s^{1/2}(1+\delta_0)\alpha]^2-s(1+\delta_0)^2\alpha^2}{2+2\delta_0}+t_4\log\frac{C_2}{C_1}+\alpha t_3\}}
{C_1^{t_3}[\tau^{\alpha-1}]^{t_3}[1-2C_2(1/\tau)^{-(\alpha-1)}]^{t_2} }\\
\leq &\frac{ \exp\{-\frac{[\{(2+\omega)s\log(n/s)\}^{1/2}-s^{1/2}(1+\delta_0)\alpha]^2-s(1+\delta_0)^2\alpha^2}{2+2\delta_0}+t_4\log\frac{C_2}{C_1}+\alpha t_3\}}
{C_1^{t_3}[\tau^{\alpha-1}]^{t_3}[1-2C_2(1/\tau)^{-(\alpha-1)}]^{t_2} }\\
\leq &\frac{  \exp\{-\kappa s\log(n/s)\}}
{[\tau^{\alpha-1}]^{t_3} }
\end{split}
\end{equation}
for any $0<\kappa< (2+\omega)/(2+2\delta_0)$, where the last inequality holds since $t_4$, $t_3<s$, $\tau\rightarrow 0$ and $n/s\rightarrow\infty$. Combining inequalities (\ref{bound0})-(\ref{boundrev2}), (\ref{bound0}) can be bounded by 
\begin{equation}\label{bound1}
\begin{split}
&\frac{\int_{C_n}\exp\{-\|\Delta\vartheta_1\|^2/(2+2\delta_0)\} \pi(d\vartheta_1)}{\pi(\|\Delta\vartheta_1\|\leq\{\eta s\log(n/s)/2\}^{1/2})}\\
\leq &\sum_{T_3\subset T_1}\frac{ \int_{C_n^{T_3}} \exp\{-\kappa s\log(n/s)\}\prod_{j\in T_2\cup T_3}\pi(\vartheta_{1,j}) d\vartheta_1}
{[\tau^{\alpha-1}]^{t_3} }\\
\leq &\sum_{T_3\subset T_1}\int_{C_n^{T_3}}\left\{\frac{\exp[-\kappa \log(n/s)]}{[\tau^{\alpha-1}]}\right\}^{t_3}\prod_{j\in T_3}\pi(\vartheta_{1,j})\prod_{j\in T_2}\pi(\vartheta_{1,j}) d\vartheta_1\\
\leq &\int_{\|\Delta\vartheta_{1,T_1}\|\leq\{(2+\omega)s\log(n/s)\}^{1/2}}\prod_{j\in T_1}\tilde\pi(\vartheta_{1,j}) d\vartheta_{1,T_1}\\
\leq &\frac{\exp\{-\kappa s\log(n/s)\}}{[\tau^{\alpha-1}]^{s}}[1+V_s(\{(2+\omega)s\log(n/s)\}^{1/2})],
\end{split}
\end{equation}
where $\tilde \pi(\vartheta)=\exp[-\kappa\log(n/s)]\pi(\vartheta)/\tau^{\alpha-1}$ if $|\vartheta|< 1$, $\tilde \pi(\vartheta)=1$ if $|\vartheta|\geq 1$;
and $V_n(R)$ is the volume of $n$-dimensional ball with radius $R$.

Combining all the above calculus results, if $\tau^{\alpha-1}\geq (s/n)^{c}\{\log(n/s)\}^{1/2}$ for $0<c<\kappa-\eta/2$ (which is guaranteed by the condition of the theorem, as long as $\delta_0$ and $\eta$ are sufficiently small), then
\[
\frac{\int_{C_n}\exp\{-\|\Delta\vartheta_1\|^2/(2+2\delta_0)\} \pi(d\vartheta_1)}{\pi(\|\Delta\vartheta_1\|\leq\{\eta s\log(n/s)/2\}^{1/2})}\exp\{\frac{\eta}{2} s\log(n/s)\}\leq
\exp\{-c's\log(n/s)\}
\]
for some $0<c<\kappa-\eta/2-c$. And this concludes (\ref{part1}).

\vskip 0.1in
{\noindent Part II: posterior contraction rate of zero $\theta_i$'s}
\vskip 0.1in

It is equivalent to consider the situation that 
$\tilde y=\vartheta_2+\epsilon$ where $\tilde y,\vartheta_2\in \BR^{n-s}$, $\epsilon\sim N(0,I_{n-s})$.
The parameter $\vartheta_2$ is subject to prior $\prod_{j=1}^{n-s}\pi(\vartheta_{2,j})$. 
The true parameter $\vartheta_2^*=0$, and we want to show that 
\begin{equation}\begin{split}\label{part2}
&E^*\pi(\|\vartheta_2\|\geq \sqrt{\omega s\log(n/s)}, \mbox{at most }c\delta s \mbox{ entries of }\vartheta_2 \\
& \qquad\mbox{ is larger than } \sqrt{\delta s\log(n/s)/n}|\tilde y)
\leq \exp\{-c_0s\log(n/s)\},
\end{split}
\end{equation}
if $\tau^{\alpha-1}\prec [(s/n)\log(n/s)]^{\alpha}$; 
and
\begin{equation}\begin{split}\label{part3}
&E^*\pi(\|\vartheta_2\|\geq \sqrt{\omega s\log(n/s)}^, \mbox{at most }c\delta s \mbox{ entries of }\vartheta_2 \\
& \qquad \mbox{ is larger than } s\sqrt{\delta \log(n/s)}/n|\tilde y)\leq \exp\{-c_0s\log(n/s)\},
\end{split}
\end{equation}
if $\tau^{\alpha-1}\prec(s/n)^\alpha\{\log(n/s)\}^{(\alpha+1)/2}$,
for 
$\delta=\omega/5$ and some constants $c<1/2$ and $c_0>0$. We will apply Lemma \ref{lemmab} to prove (\ref{part2}) and (\ref{part3}).

To proof (\ref{part2}), we consider the testing function $\phi(\tilde y)=\max_{|\xi|\leq c\delta s}1(\|\tilde y_\xi\|\geq \{\delta s\log(n/s)\}^{1/2})$,
where  $\tilde y_{\xi}$ is the subvector of $\tilde y$ corresponding to model $\xi$.
First, for any fixed $\delta_0>0$, by Lemma \ref{chi}
and Sterling's approximation, we have
\begin{equation}\label{tfun4}
\begin{split}
E_{\vartheta_2^*}\phi(\tilde y)&= \lceil c\delta s\rceil{n \choose {\lceil c\delta s\rceil} }Pr(\chi_{\lceil c\delta s\rceil}^2\geq \delta s\log(n/s))\\
&\leq \lceil c\delta s\rceil{n \choose {\lceil c\delta s\rceil} }\exp\left\{-\frac{\delta s\log (n/s)}{2+\delta_0}\right\}\leq \exp\{-c's\log(n/s)\}
\end{split}
\end{equation}
for some $0<c'<\delta(1/(2+\delta_0)-c)$, when $n$ is sufficiently large and we choose $c<1/(2+\delta_0)$.

We define two sets in $\BR^{n-s}$ as: $B_n=\{\mbox{more than }c\delta s\mbox{ entries of }|\vartheta_2| \mbox{ are bigger than }$ $\sqrt{\delta s\log(n/s)/n}\}$,
and 
$C_n=\{\|\vartheta_2\|\geq \sqrt{5\delta s\log(n/s)} \mbox{ and at most }c\delta s\mbox{ entries of }|\vartheta_2|$ $\mbox{ are bigger than }\sqrt{\delta s\log(n/s)/n}\}$.
For any $\vartheta_2\in C_n$, let $\hat\xi=\hat\xi(\vartheta_2)=\{j: |\vartheta_{2,j}|\geq\{\delta s\log(n/s)/n\}^{1/2}\}\}$, thus we always have that
$|\hat\xi|\leq c\delta s$, $\|\vartheta_{2,\hat\xi^c}\|\leq \{\delta s\log(n/s)\}^{1/2}$
and $\|\vartheta_{2,\hat\xi}\|\geq 2\{\delta s\log(n/s)\}^{1/2}$. 
Then we can derive that
\begin{equation}\label{tfun5}
\begin{split}
&\sup_{\vartheta_2\in C_n}E_{\vartheta_2}[1-\phi(\tilde y)]\leq Pr(\|\tilde y_{\hat\xi}\|\leq \{\delta s\log(n/s)\}^{1/2}|\vartheta_2)\\
\leq& Pr(\|\tilde y_{\hat\xi}-\vartheta_{2,\hat\xi}\|\geq \|\vartheta_{2,\hat\xi}-\vartheta^*_{2,\hat\xi}\|-\{\delta s\log(n/s)\}^{1/2}|\vartheta_2) \\
\leq & Pr(\chi_{\lceil c\delta s\rceil}^2\geq[\|\vartheta_{2,\hat\xi}-\vartheta^*_{2,\hat\xi}\|-\{\delta s\log(n/s)\}^{1/2}]^2) \\
\leq&\exp\left\{-\frac{\delta s\log(n/s)}{2+\delta_0}\right\}.
\end{split}
\end{equation}

Note that with dominating probability, $\|y\|\leq (1+c') n^{1/2}$ for any $c'>0$ and 
\begin{equation}\label{margin}
\begin{split}
&\frac{m(\tilde y)}{f^*(\tilde y)}\geq \int_{\|\vartheta_2\|_{\infty}\leq s\log (n/s)/n}\frac{f(\vartheta_2;\tilde y)}{f(0;\tilde y)}\pi(\vartheta_2)\\
= &\int_{\|\vartheta_2\|_{\infty}\leq \eta s\log (n/s)/n}\exp\left\{-\frac{1}{2}(\|\tilde y-\vartheta_2\|^2-\|\tilde y\|^2)\right\}\pi(\vartheta_2)\\
\geq &\exp\{-3\eta s\log(n/s)\}\pi(\|\vartheta_2\|_{\infty}\leq \eta s\log (n/s)/n),
\end{split}
\end{equation}
for any positive $\eta$.
By the condition of $\tau$, 
\begin{equation}\label{margin2}
\begin{split}
&\pi(\|\vartheta_2\|_{\infty}\leq \eta s\log (n/s)/n) \geq(1-C_2(\frac{\eta \frac{s}{n}\log\frac{n}{s}}{\tau})^{-(\alpha-1)})^{n-s}\\
\geq &(1-\eta'\frac{s}{n}\log\frac{n}{s})^{n-s}\rightarrow \exp\{-\eta's\log(n/s)\}
\end{split}
\end{equation}
for any positive $\eta'$.

Similar, we have the $\pi(|\vartheta_{2,i}|\geq\{\delta s\log(n/s)/n\}^{1/2})=o([(s/n)\log(n/s)]^{(\alpha+1)/2})$. Thus by Lemma \ref{lemmad}, 
it is easy to verify that the prior of $B_n$ satisfies
\begin{equation}\label{bn}
\begin{split}
-\log\pi(B_n)\gtrsim c\delta [(\alpha-1)/2] s\log(n/s)
\end{split}
\end{equation}

Combining results (\ref{tfun4})-(\ref{bn}), by Lemma \ref{lemmab}, one can see that
(\ref{part2}) holds as long as we choose sufficiently small $\eta$ and $\eta'$.

Similar arguments can be use to proof (\ref{part3}). The only difference is that we now define the set
$B_n=\{\mbox{more than }c\delta s\mbox{ entries of }|\vartheta_2| \mbox{ are bigger than }s\sqrt{\delta \log(n/s)}/n\}$ and set
$C_n=\{\|\vartheta_2\|\geq \sqrt{5\delta s\log(n/s)} \mbox{ and at most }c\delta s\mbox{ entries of }|\vartheta_2| \mbox{ are bigger than }$ $s\sqrt{\delta \log(n/s)}/n\}$.
The details of proving (\ref{part3}) is left to the readers.

\vskip 0.2in
Due to the fact that posterior of $\vartheta_1$ and $\vartheta_2$ are independent, (\ref{part1})
and (\ref{part2}) imply 
\[\begin{split}
&E^*\pi(\|\theta-\theta^*\|\geq \{(2+\omega)s\log(n/s)\}^{1/2}+\{(\omega)s\log(n/s)\}^{1/2}|D_n)\\
\leq& \exp\{-c_0's\log(n/s)\}
\end{split}\] for some $c_0'$.
And (\ref{part1}) and (\ref{part3}) imply that 
\[
\begin{split}
E^*\pi(\|\theta-\theta^*\|_1\geq s\{(2+\omega)&\log(n/s)\}^{1/2}+s\{\omega\delta\log(n/s)\}^{1/2} \\
&+s\{\delta\log(n/s)\}^{1/2}|D_n)
\leq \exp\{-c_0's\log(n/s)\},
\end{split}
\]for some $c_0'$.
These conclude our results in Theorem \ref{main}.

\subsection{Proof of Theorem \ref{adaptive}}
Consider the testing function
\begin{equation}\label{test}
\phi(D_n)=\max_{\xi\supset\xi^*,|\xi\backslash\xi^*|\leq c\delta s}1(\|y_\xi-\theta_\xi^*\|\geq \{\delta s\log(n/s)\}^{1/2}),
\end{equation}
for some $c\leq 1/2$,
and define two sets in $\Theta$: $B_n=\{\theta: |\{j: j\notin\xi^*, |\theta_j|\geq s\{\delta\log(n/s)\}^{1/2}/n\}|\geq c\delta s\}$, and $C_n=\{\theta:\|\theta-\theta^*\|\geq\sqrt{(2+2\omega)s\log(n/s)}\}\backslash B_n$,
where $\xi^*=\{j: |\theta^*_j|\neq 0\}$.
The $\delta$ is a small quantity depending on $\omega$ which we will determine later.

By the same arguments used in the proof of Theorem \ref{main} and Lemma \ref{chi}, 
we have that, for any fixed small $\delta_0$ satisfying $1/(2+\delta_0)>c$,
\[E_{\theta^*}\phi(D_n)\leq \exp\left\{-[\frac{1}{2+\delta_0}-c]\delta s\log (n/s)\right\},\]
if $n$ is sufficiently large,
and 
\[\sup_{\theta\in C_n}E_{\theta}(1-\phi(D_n))\leq\exp\{-[\{ (2+2\omega-\delta)^{1/2}-\delta^{1/2}\}^2/(2+\delta_0)]s\log(n/s)\},\]
and we choose the values of $\delta$ and $\delta_0$ to be very small,  such that 
$\{ \sqrt{2+2\omega-\delta}-\sqrt\delta\}^2/(2+\delta_0)>1+3\omega/4$.

To derive the upper bound for $E^*\pi(C_n\cup B_n|D_n)$, let us study $E^*\pi(C_n|D_n)$ and $E^*\pi(B_n|D_n)$ separately.

Use the same notation and arguments in the proof of Theorem \ref{main}, let $m(D_n)$ be the marginal density 
of data, and $f^*(D_n)$ be the true likelihood.
Let $\vartheta_1$ and $\vartheta_2$ be the subvectors of $\theta$ corresponding to $\xi^*$ and 
$\xi^{*c}$.
Thus with probability $1-\exp\{-cs\log(n/s)\}$,
\[\begin{split}
\frac{m(D_n)}{f^*(D_n)}\geq &\exp\{-4\eta s\log(n/s)\}\\
&\times\pi(\|\vartheta_1-\vartheta_1^*\|\leq \{\eta s\log(n/s)/2\}^{1/2}, \|\vartheta_2\|_\infty\leq \eta s\log(n/s)/n)
\end{split}
\]
for any positive $\eta$.

By Lemma \ref{lemmab} and testing function (\ref{test}),
we have that 
\begin{equation}\label{tfun6}
\begin{split}
&E^*\pi(C_n|D_n)\leq \exp\{-cs\log(n/s)\}+\exp\left\{-[\frac{1}{2+\delta_0}-c]\delta s\log (n/s)\right\}\\
+&\frac{\exp\{-[\{ (2+2\omega-\delta)^{1/2}-\delta^{1/2}\}^2/(2+\delta_0)]s\log(n/s)\}}{\exp\{-4\eta s\log(n/s)\}\pi(\|\vartheta_1-\vartheta_1^*\|\leq \sqrt{\eta s\log(n/s)/2}, \|\vartheta_2\|_\infty\leq \eta s\log(n/s)/n)}.
\end{split}
\end{equation}

When $\tau\in[(s/n)^{(1+\omega/2)/(\alpha-1)},(s/n)^{\alpha/(\alpha-1)}]$,
as showed in the proof of Theorem \ref{main}, we have that $\pi(\|\vartheta_2\|_\infty\leq \eta s\log(n/s)/n)|\tau)
\geq \exp\{-\eta's\log(n/s)\}$ for any positive $\eta'$,
and 
\[\begin{split}
&\pi(\|\vartheta_1-\vartheta_1^*\|\leq \{\eta s\log(n/s)/2\}^{1/2}|\tau)\\
\geq& \{\eta\log(n/s)/2\}^{s/2} \left(\min_{|\theta|\leq\{\eta\log(n/s)/2\}^{1/2}+\max|\theta_j^*| } \pi(\theta|\tau)\right)^s\\
\geq &\exp\{-(1+\omega/2+\omega/5+\eta'')s\log(n/s)\},
\end{split}
\]
for any positive $\eta''$, where the last inequality is due to the upper bound condition
of $\max|\theta_j^*|$.
Therefore, 
\[
\begin{split}
&\pi\{\|\vartheta_1-\vartheta_1^*\|\leq \sqrt{\eta s\log(n/s)/2}, \|\vartheta_2\|_\infty\leq \eta s\log(n/s)/n)\}\\
\geq& \pi\{(s/n)^{(1+\omega/2)/(\alpha-1)}\leq \tau\leq (s/n)^{\alpha/(\alpha-1)}\}
\\
&\times\exp\{-(\eta'+1+\omega/2+\omega/5+\eta'')s\log(n/s)\}.
\end{split}
\]
Combining the above results, with the condition on the prior $\pi(\tau)$, we have that
(\ref{tfun6})$\leq \exp\{-c's\log(n/s)\}$ for some positive $c'$, given $\eta'$ and $\eta''$
are sufficiently small.

Now we study the posterior $E^*\pi(B_n|D_n)$.
The marginal distribution can be written as $m(D_n)=\int f(y^{(1)};\vartheta_1)f(y^{(2)};\vartheta_2)\pi(\theta)d\theta$, where $y^{(1)}=y_{\xi^*}$, $y^{(2)}=y_{\xi^{*c}}$, 
$ f(y^{(1)};\vartheta_1)$ and $f(y^{(2)};\vartheta_2)$ are the likelihood functions for $y^{(1)}$ and $y^{(2)}$, and $\theta=(\vartheta_1,\vartheta_2)$.
By the same arguments used in the proof of Theorem \ref{main},
with probability $1-\exp\{-cs\log(n/s)\}$,
\begin{equation}\begin{split}\label{set}
&m(D_n)\geq f(y^{(2)};\vartheta^*_2)\exp\{-3\eta s\log(n/s)\}\int_{\|\vartheta_2\|_\infty\leq \eta s\log(n/s)/n} f(y^{(1)};\vartheta_1)\pi(\theta)d\theta,\\
&\mbox{and }\|y^{(1)}-\vartheta_1^*\|^2\leq \eta''' s\log(n/s),
\end{split}
\end{equation}
for some positive $c$ and $\eta'''$. Let $\Omega_n$ be the event that (\ref{set}) holds.

Therefore, by the same argument in the Lemma \ref{lemmab},
\begin{equation}\label{tfun7}
\begin{split}
&E^*\pi(B_n|D_n)\\
\leq& 
(1-P^*(\Omega_n))\\
& + E_{y^{(1)}}\frac{ E_{y^{(2)}}\int_{B_n} f(y^{(1)};\vartheta_1)f(y^{(2)};\vartheta_2)\pi(\theta)d\theta /f(y^{(2)};\vartheta_2^*)}
{ \exp\{-3\eta s\log(n/s)\}\int_{\|\vartheta_2\|_\infty\leq \eta s\log(n/s)/n} f(y^{(1)};\vartheta_1)\pi(\theta)d\theta}1_{\Omega_n}\\
=& 
(1-P^*(\Omega_n))\\ 
&+E_{y^{(1)}}\frac{ \int_{B_n} f(y^{(1)};\vartheta_1)\pi(\theta)d\theta }
{ \exp\{-3\eta s\log(n/s)\}\int_{\|\vartheta_2\|_\infty\leq \eta s\log(n/s)/n} f(y^{(1)};\vartheta_1)\pi(\theta)d\theta}1_{\Omega_n}.
\end{split}
\end{equation}

Let's  study the last term in the right handed side of (\ref{tfun7}).
Define two sets in $\BR^{n-s}$: $B_n^1=\{\vartheta_2: |\{j: |\vartheta_{2,j}|\geq s\{\delta\log(n/s)\}^{1/2}/n\}|\geq c\delta s\}$,
$B_n^2=\{\vartheta_2: \|\vartheta_2\|_\infty\leq \eta s\log(n/s)/n\}$, hence
\begin{equation}
\begin{split}
& E_{y^{(1)}}\frac{ \int_{B_n} f(y^{(1)};\vartheta_1)\pi(\theta)d\theta}{\int_{\|\vartheta_2\|_\infty\leq \eta s\log(n/s)/n} f(y^{(1)};\vartheta_1)\pi(\theta)d\theta}1_{\Omega_n}\\
\leq &  E_{y^{(1)}}\frac{(\int_{\tau\leq\tau_0}+\int_{\tau>\tau_0})\int_{B_n^1}\int_{\BR^s}f(y^{(1)};\vartheta_1)\pi(\vartheta_1|\tau)\pi(\vartheta_2|\tau)\pi(\tau)d\vartheta_1d\vartheta_2 d\tau  }
{\int_{\tau\leq\tau_0}\int_{B_n^2}\int_{\BR^s}f(y^{(1)};\vartheta_1)\pi(\vartheta_1|\tau)\pi(\vartheta_2|\tau)\pi(\tau)d\vartheta_1d\vartheta_2 d\tau }1_{\Omega_n},
\end{split}
\end{equation}
where $\tau_0=(s/n)^{\alpha/(\alpha-1)}$.
When $\tau\leq \tau_0$, by the same arguments used in the Part II of the proof of theorem \ref{main}, we have
\begin{equation}\begin{split}
&\frac{\int_{B_n^1}\int_{\BR^s}f(y^{(1)};\vartheta_1)\pi(\vartheta_1|\tau)\pi(\vartheta_2|\tau)d\vartheta_1d\vartheta_2  }
{\int_{B_n^2}\int_{\BR^s}f(y^{(1)};\vartheta_1)\pi(\vartheta_1|\tau)\pi(\vartheta_2|\tau)d\vartheta_1d\vartheta_2 }=\frac{\int_{B_n^1}\pi(\vartheta_2|\tau)d\vartheta_2 }
{\int_{B_n^2}\pi(\vartheta_2|\tau)d\vartheta_2 }\\
\leq&\frac{\exp\{-c\delta(\alpha-1)s\log(n/s)/2\}}
{\exp\{-\eta's\log(n/s)\}}
\end{split}\end{equation}
for any fixed small $\eta'>0$.
And on event $\Omega_n$,
\[\begin{split}
&\int_{\BR^s}\exp\{-\|y^{(1)}-\vartheta_1\|^2/2\}\pi(\vartheta_1|\tau)d\vartheta_1\\
=&\int_{\BR^s}\exp\{-\|y^{(1)}-\vartheta_1^*+\vartheta_1^*-\vartheta_1\|^2/2\}\pi(\vartheta_1|\tau)d\vartheta_1\\
\geq&\int_{\|\vartheta_1-\vartheta_1^*\|_\infty\leq \{\log(n/s)\}^{1/2}}\exp\{-\|y^{(1)}-\vartheta_1\|^2/2\}\pi(\vartheta_1|\tau)d\vartheta_1\\
\geq&\int_{\|\vartheta_1-\vartheta_1^*\|_\infty\leq \{\log(n/s)\}^{1/2}}\exp[-\{\eta'''+1+2(\eta''')^{1/2}\}s\log(n/s)/2]\pi(\vartheta_1|\tau)d\vartheta_1\\
=&\exp\{-c's\log(n/s)\}\int_{\|\vartheta_1-\vartheta_1^*\|_\infty\leq \{\log(n/s)\}^{1/2}}\pi(\vartheta_1|\tau)d\vartheta_1,
\end{split}\]
for some positive $c'$.
Therefore, conditional on $\Omega_n$, we have
\begin{equation}\label{tfun8}
\begin{split}
&\frac{\int_{\tau>\tau_0}\int_{B_n^1}\int_{\BR^s}f(y^{(1)};\vartheta_1)\pi(\vartheta_1|\tau)\pi(\vartheta_2|\tau)\pi(\tau)d\vartheta_1d\vartheta_2 d\tau  }
{\int_{\tau\leq\tau_0}\int_{B_n^2}\int_{\BR^s}f(y^{(1)};\vartheta_1)\pi(\vartheta_1|\tau)\pi(\vartheta_2|\tau)\pi(\tau)d\vartheta_1d\vartheta_2 d\tau }\\
\leq &\frac{\int_{\tau>\tau_0}\pi(\tau) d\tau  }
{\exp\{-c's\log(n/s)\}\int_{\tau_1}^{\tau_0}\int_{B_n^2}\int_{\|\vartheta_1-\vartheta_1^*\|_\infty\leq \sqrt{\log(n/s)}}\pi(\vartheta_1|\tau)\pi(\vartheta_2|\tau)\pi(\tau)d\vartheta_1d\vartheta_2 d\tau }\\
\leq &\frac{\int_{\tau>\tau_0}\pi(\tau) d\tau  }
{\exp\{-c''s\log(n/s)\}\int_{\tau_1\leq\tau\leq\tau_0}\pi(\tau)d\tau }\\
\leq &\exp\{-c'''s\log(n/s)\}
\end{split}
\end{equation}
for any positive $c''$ and $c'''$, where $\tau_1=(s/n)^{(1+\omega/2)/(\alpha-1)}$,
and the second inequality follows by the condition $\max|\vartheta_j^*|=\max|\theta_j^*|\leq (n/s)^{\omega/(5\alpha)}$.

Combining (\ref{tfun7})-(\ref{tfun8}), we have that $E^*\pi(B_n|D_n)\leq \exp\{-c''''s\log(n/s)\}$ for some positive $c''''$ if $\eta$ and $\eta'$ are small enough.

These results conclude the theorem.

\vskip 0.2in
\bibliographystyle{imsart-nameyear}
\bibliography{ref}

\end{document}


\thispagestyle{empty}
\title{Supplementary Material for ``Bayesian Shrinkage towards Sharp Minimaxity''}
\author{Qifan Song}
\date{}

\maketitle
\section{Additional simulations}
Three more simulation experiments are presented in this section as a supplementary to the numerical results in the main manuscript.

In the first experiment, we consider that the true nonzero coefficients are varying, rather than being constants. To be more specific, we let the data dimension increases as $n=50$, 100, 500, 1000 and the sparsity $s$ equals to the rounded value of ${n}^{1/2}$.
The nonzero coefficients are chosen to be $\theta^*_i=\{t\log(n/s)\}^{1/2}$ for $1\leq i\leq s$, where $t\sim\mbox{Unif}(0,5)$. We implement the $t$-prior:
\begin{equation}
\theta_i\sim \mbox{N}(0,\lambda_i^2\tau^2);\,
\lambda_i^2\sim \mbox{IG}((\alpha-1)/2, (\alpha-1)/2),
\end{equation}
with $\tau$ follows a deterministic choice $\tau=(s/n)^c$, as well as the horseshoe prior.
Four different choices of prior modelings are used for comparison:
(1) $\alpha=1.1$, $c=(\alpha+0.05)/(\alpha-1)$;
(2) $\alpha=2.1$, $c=(\alpha+0.05)/(\alpha-1)$;
(3) $\alpha=2.1$, $c=1/(\alpha-1)$;
(4) horseshoe prior with global shrinkage $\tau=(s/n)\{\log(n/s)\}^{1/2}$.
Discussion on the choices of $c$ can be found in the simulation section in the main manuscript. We evaluate the posterior mean of squared $L_2$ error (i.e., $E(\|\theta-\theta^*\|^2|D_n)$), posterior mean of $L_1$ error (i.e., $E(\|\theta-\theta^*\|_1|D_n)$) and the posterior probability outside $L_2$ minimax ball (i.e., $\pi(\|\theta-\theta^*\|^2\geq 2.2s\log(n/s)|D_n)$). The simulation results, based on 100 repetitions, are displayed in Figure \ref{mixed}.
The results clearly show that the $t$-prior with $\alpha=1.1$, $c=(\alpha+0.05)/(\alpha-1)$ performs the best, its posterior $L_2$ and $L_1$ errors are below the $L_2/L_1$ minimax rates respectively, and its posterior probability outside $\{\|\theta-\theta^*\|^2\geq 2.2s\log(n/s)\}$ decreases to zero fastest, as $n$  increases. Consistent to the simulation results presented in the main manuscript, horseshoe prior and $t$-prior with $\alpha=2.1$, $c=1/(\alpha-1)$ (green and blue curves in Figure \ref{mixed}) share very similar performance.

\begin{figure}[htbp]
	\begin{center}
		\includegraphics[width=7.5cm]{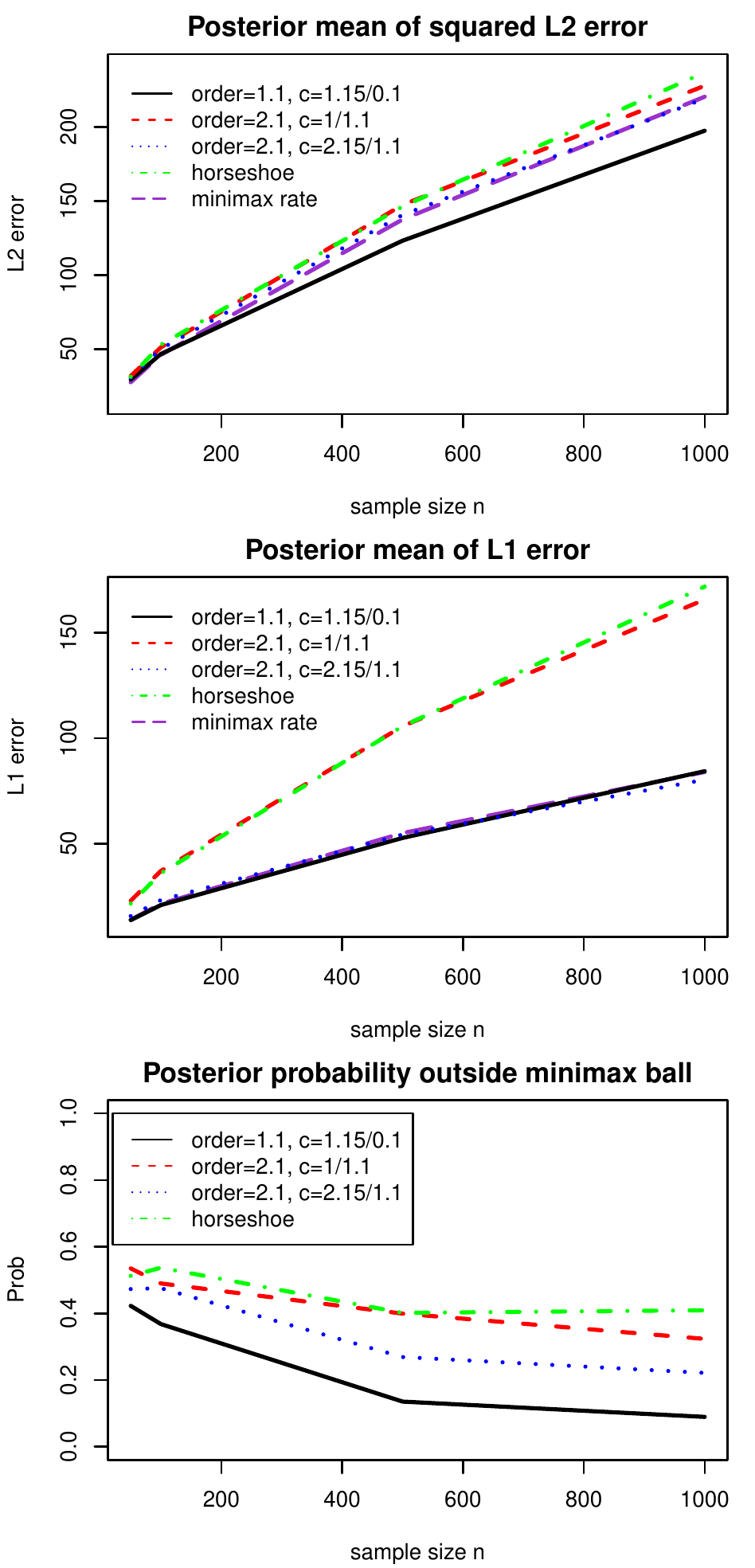}
		\caption{
			Posterior asymptotics under varying coefficients.
		}\label{mixed}
	\end{center}
\end{figure}

Our second experiment try to investigate the posterior convergence on the ``active'' $\theta_i$'s and ``nonactive'' $\theta_i$'s, i.e., 
$\theta_S$ and $\theta_{S^c}$, where $S=\{i:\theta_i^*\neq 0\}$ and the subscript means the corresponding subvector. Especially, we would like to compare $t$-prior with $\alpha=1.1$, $c=(\alpha+0.05)/(\alpha-1)$ against horseshoe prior
with global shrinkage $\tau=(s/n)\{\log(n/s)\}^{1/2}$. As discussed, the lower bound condition on $\tau$ in Theorem 2.1 ensures posterior convergence of the nonzero $\theta$'s, and the upper bound condition on $\tau$ ensures the posterior convergence of the zero $\theta$'s. The choice of $\tau$ for horseshoe prior only satisfies the lower bound condition, but not the upper bound condition, hence we  conjecture that horseshoe prior leads to much worse posterior contraction on the nonactive $\theta$'s. Our simulation result indeed justifies it. 
The simulation dimension increases as $n=50$, 100, 500, 1000 and the sparsity $s$ equals to the rounded value of ${n}^{1/2}$. The nonzero coefficients are chosen to be $\theta^*_i=\{t\log(n/s)\}^{1/2}$ for $1\leq i\leq s$, where $t=1.2, 2.2, 4.2$.
In Figure \ref{rate}, we plot $E(\|\theta_S-\theta_S^*\|^2|D_n)$ (dashed line, left panel), $E(\|\theta_{S^c}-\theta_{S^c}^*\|^2|D_n)$ (dash-dot line, left panel), $E(\|\theta_S-\theta_S^*\|_1|D_n)$ (dashed line, right panel) and $E(\|\theta_{S^c}-\theta_{S^c}^*\|_1|D_n)$ (dash-dot line, right panel). It clearly shows that there is little difference between $t$-prior and horseshoe prior, in terms of the posterior contraction on the active $\theta$'s, regardless of the signal strength value $t$. However, horseshoe has a much worse posterior contraction rate on the nonactive $\theta$'s than $t$-prior, under both $L_1$ and $L_2$  metrics.

\begin{figure}[htbp]
	\begin{center}
		\includegraphics[width=13cm]{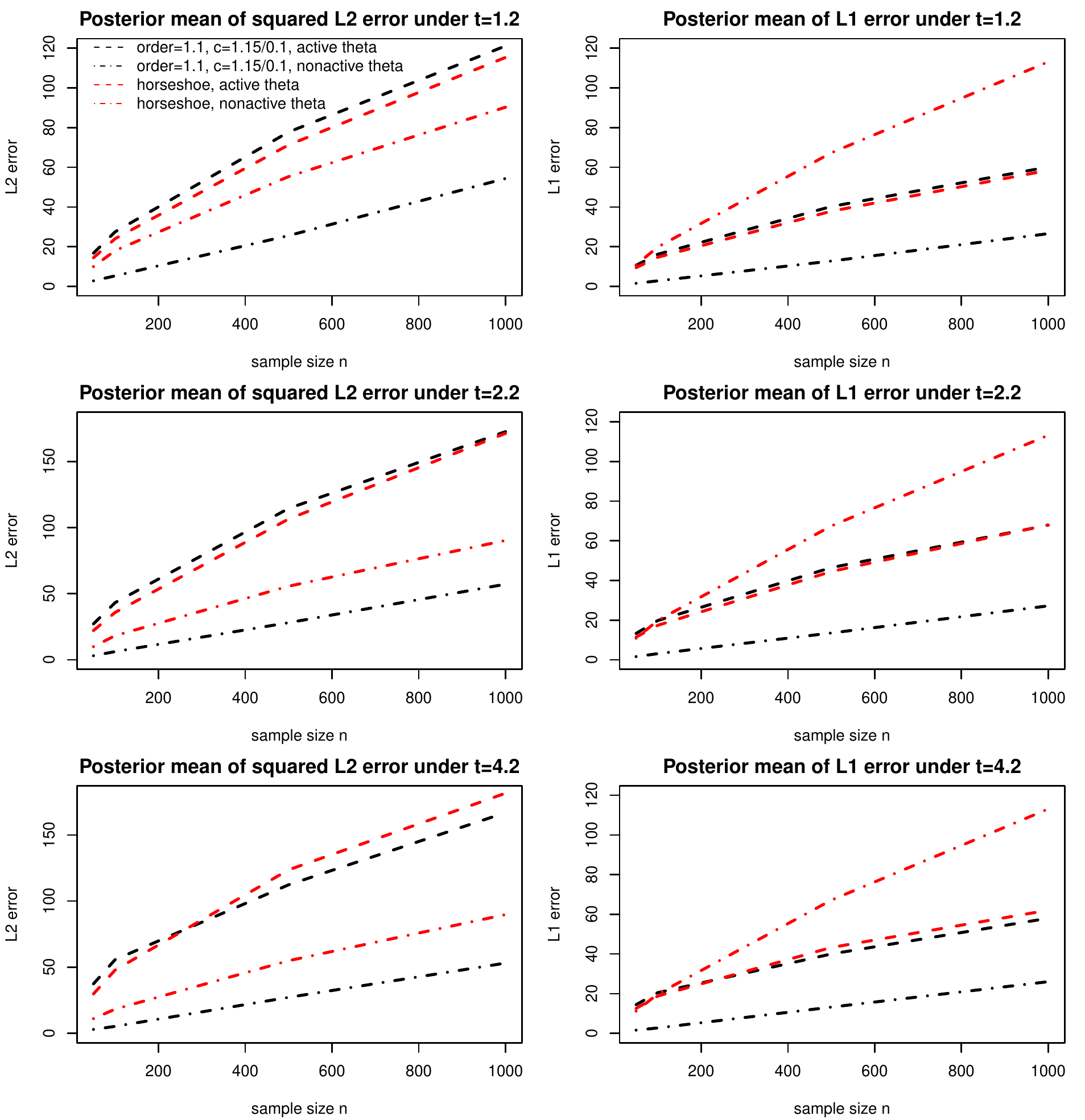}
		\caption{
			Posterior contraction for active and nonactive $\theta_i$'s. The results for $t$-prior are in black color, and the results for horseshoe are in red color.
		}\label{rate}
	\end{center}
\end{figure}

In the end, we perform an exploratory experiment to study the inference performance (i.e., credible intervals and model selection) based on shrinkage priors. In the literature, \citet{VanSV2017_2} investigated uncertainty quantification of horseshoe prior. In this experiment, we consider the marginal credible interval for $\theta_i$ of form $E(\theta_i|D_n)\pm 1.96\sqrt{Var(\theta_i|D_n)}$ (the same interval was used in the simulation studies of \citep{VanSV2017_2}). And the Bayesian model selection result is based on whether the marginal credible interval includes zero or not. We assess the coverage of marginal credible intervals and the correctness of model selection results, for $t$-prior with $\alpha=1.1$, $c=(\alpha+0.05)/(\alpha-1)$ and horseshoe prior with global shrinkage $\tau=(s/n)\{\log(n/s)\}^{1/2}$. Namely, we evaluate the percentage of selected true nonzero $\theta_i$'s and the percentage of non-selected zero $\theta_i$'s (for both percentages, the higher the better), as well as the average credible interval coverages for both nonzero and zero $\theta_i$'s. The simulation dimension increases as $n=50$, 100, 500, 1000 and the sparsity $s$ equals to the rounded value of ${n}^{1/2}$. The nonzero coefficients are chosen to be $\theta^*_i=\{t\log(n/s)\}^{1/2}$ where $t=1.2, 2.2, 4.2$. The results, based on 100 repetitions, are displayed in Figure \ref{inference}.

\begin{figure}[htbp]
	\begin{center}
		\includegraphics[width=13cm]{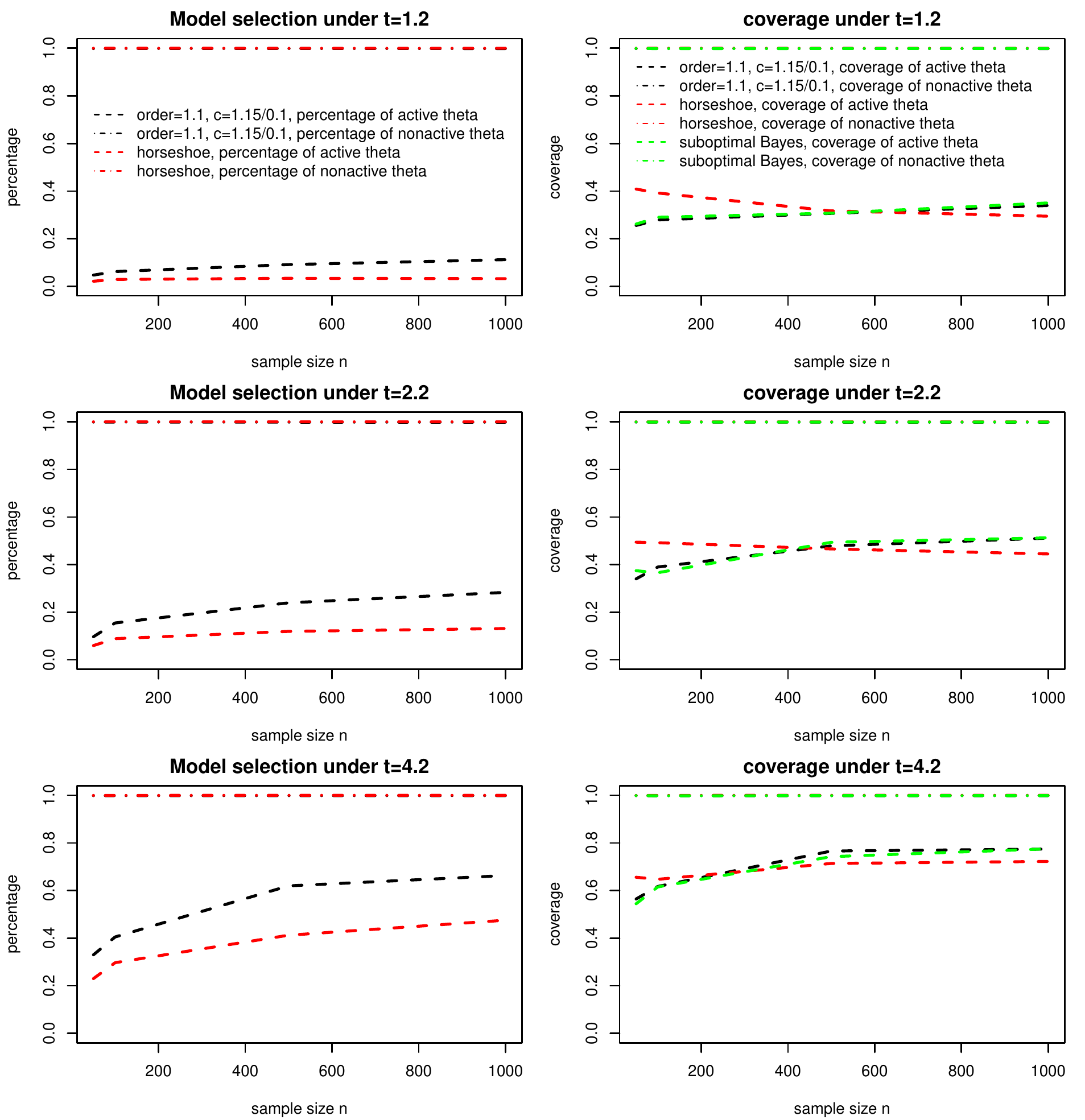}
		\caption{
			Posterior inferences for shrinkage priors. Left panels: model selection accuracy. Right panels: credible interval coverages. The results of $t$-prior are in black color and the results of horseshoe prior are in red color.
		}\label{inference}
	\end{center}
\end{figure}

For both model selection and marginal credible interval coverage, the zero $\theta_i$'s are almost never selected, and have almost 100\% coverage. On the other hand, for the nonzero $\theta_i$'s, their probability to be selected and  marginal credible interval coverages, heavily relies on the signal strength. That is, larger strength leads to better performance. This observation is consistent to the theoretical results discovered by \citep{VanSV2017_2}: consistent model selection and interval coverage don't happen to medium signal $\theta_i$'s.
Comparing with horseshoe prior, we observe that $t$-prior performs better for  model selection, but has a comparable performance of credible interval coverage.
In additional, we also study the credible interval coverage by $t$-prior with $\alpha=1.1$, $\tau=(1/n)^{c}$ where $c=(\alpha+0.05)/(\alpha-1)$. Due to Theorem 2.2, we believe it will lead to suboptimal posterior contraction, and would like to see whether sacrificing convergence rate can improve the coverage performance of credible intervals. The results are displayed by green curves, and shows that there is no improvement comparing to the black curves (which corresponds to $t$-prior with $\tau=(s/n)^{c}$).

\vskip 0.2in
\bibliographystyle{plainnat}
\bibliography{ref}